\documentclass[preprint,3p]{elsarticle}
\usepackage{amsmath, amsthm, amscd, amsfonts, amssymb, graphicx, color}

\newtheorem{example}{Example}[section]

\newtheorem{proposition}{Proposition}[section]
\newtheorem{theorem}{Theorem}[section]
\newtheorem{lemma}{Lemma}[section]

\numberwithin{equation}{section}
\newproof{pf}{Proof}
\newproof{pot}{Proof of Lemma \ref{l2}}
\newproof{poot}{Proof of Corollary \ref{co1}}
\newproof{pft5}{Proof of Theorem \ref{t5}}
\newproof{pft1}{Proof of Theorem \ref{t1}}
\newproof{pft6}{Proof of Theorem \ref{t6}}
\newproof{pft3}{Proof of Theorem \ref{t3}}

\numberwithin{equation}{section}
\newdefinition{ex}{Example}[section]
\journal{}
\begin{document}
\begin{frontmatter}

\title{Bounds for the extremal parameter of nonlinear eigenvalue problems and application to the explosion problem in a flow}
\author{A. Aghajani $^{\text{a,b}}$ \corref{cor1}}
\cortext[cor1]{Corresponding author: A. Aghajani. Tel.
+9821-73913426. Fax +9821-77240472.} \ead{aghajani@iust.ac.ir}

\author{A.M. Tehrani $^{\text{a}}$\corref{}}
\ead{amtehrani@iust.ac.ir}

\address {$^{\text{a}}$School of Mathematics, Iran University of Science and
Technology, Narmak, Tehran 16844-13114, Iran.\\

$^{\text{b}}$School of Mathematics, Institute for Research in Fundamental Sciences (IPM), P.O.Box: 19395-5746, Tehran, Iran. }
\begin{abstract}
We consider the nonlinear eigenvalue problem $ L u = \lambda f(u) $, posed in a smooth bounded domain $ \Omega \subseteq \Bbb{R}^{N} $ with Dirichlet boundary condition, where $ L $ is a uniformly elliptic second-order linear differential operator, $ \lambda > 0 $ and $ f:[0,a_{f}) \rightarrow \Bbb{R}_{+} $ $ (0 < a_{f} \leqslant \infty)$ is a smooth, increasing and convex nonlinearity such that $ f(0) > 0 $ and which blows up at $ a_{f} $. First we present some upper and lower bounds for the extremal parameter $ \lambda^{*} $ and the extremal solution $ u^{*} $. Then we apply the results to  the operator $ L_A = - \Delta + A c(x) $ with $ A>0 $  and $ c(x) $ is a divergence-free flow in $ \Omega $. We show that, if $\psi_{A,\Omega}$ is the maximum of the solution $\psi_{A}(x)$ of the equation  $ L_A u = 1 $ in $\Omega$ with Dirichlet boundary condition, then  for  any incompressible flow $ c(x) $ we have, $\psi_{A,\Omega} \longrightarrow 0$ as $A \longrightarrow \infty$ if and only if $c(x)$ has no non-zero first integrals in $H_{0}^{1}(\Omega)$. Also, taking $ c(x)=-x\rho(|x|) $ where $\rho$ is a smooth real function on $[0,1]$ then $c(x)$  is never divergence-free in unit ball $ B\subset \Bbb{R}^{N} $, but our results completely determine the behaviour of the extremal parameter $ \lambda^{*}_{A} $ as $ A \longrightarrow \infty $.
\end{abstract}

\begin{keyword}
semilinear elliptic problem, nonlinear eigenvalue problem, extremal solution.

\textit{2010 Mathematics Subject Classification:}
35B40, 35P30, 35J91, 35B32
\end{keyword}

\end{frontmatter}
%
\section{Introduction and main results}
%
The explosion problem in a flow concerns existence and regularity of positive solutions of nonlinear eigenvalue problem of the form
\begin{equation}\label{eq14}
\left\{\begin{array}{ll} - \Delta u + c(x) \centerdot \nabla u = \lambda f(u)  {\rm }\ &x \in \Omega, \\
\hskip23.6mm u = 0 {\rm }\ &x \in \partial \Omega,
\end{array}\right.
\end{equation}
where $\Omega$ is a bounded smooth domain in $R^{N}$ ($N \geqslant 2$), $\lambda > 0$, $f : [0,a_{f}) \rightarrow \Bbb{R}_{+}$ is a smooth, increasing, convex function such that $f(0) > 0$, $\int_{0}^{a_{f}}\frac{ds}{f(s)} < \infty$ which blows up at the endpoint of its domain. We consider two cases either $f$ is a \emph{regular nonlinearity} i.e., $D_{f}:=[0,+\infty)$ and $f$ is superlinear, namely $f(t) / t \rightarrow \infty$ as $t \rightarrow \infty$, or when $D_{f}:=[0,1)$ and $\lim_{t\nearrow 1}f(t)= + \infty$
called a \emph{singular nonlinearity}. Typical examples of regular nonlinearities $f$ are $e^{u}$, $(1+u)^{p}$ for $p > 1$, while singular nonlinearities include $(1-u)^{-p}$ for $p > 1$.

It is said that a solution of problem $(\ref{eq14})$ is classical provided $\|u\|_{L^{\infty}} < \infty$ (resp., $\|u\|_{L^{\infty}} < 1$) if $f$ is a regular (resp., singular) nonlinearity. It is known that there exists an extremal parameter (critical threshold) $\lambda^{*} \in (0, \infty)$ depending on $\Omega$, $c(x)$ and $N$, such that problem $(\ref{eq14})$ has a unique minimal classical solution $u_{\lambda} \in C^{2}(\overline{\Omega})$ if $0 < \lambda < \lambda^{*}$ while no solution exists, even in the weak sense, for $\lambda > \lambda^{*}$. One can show that $ \lambda \longmapsto u_{\lambda}(x) $ is increasing in $ \lambda $ for all $ x \in \Omega $ and therefore one can define the extremal solution $ u^{*}(x) = \lim_{\lambda \nearrow \lambda^{*}} u_{\lambda}(x) $, which is a weak solution of problem $ (\ref{eq14}) $ at $ \lambda = \lambda^{*} $. The regularity of solutions at $ \lambda=\lambda^{*} $ is a delicate issue. In the case that endpoint of the domain $ f $ is finite, Cowan and Ghoussoub in \cite{13} proved that the extremal solution of problem $ (\ref{eq14}) $ with $ f(u) = \frac{1}{(1-u)^{2}} $ is regular for all $1 \leqslant N \leqslant 7$. Luo, Ye and Zhou in \cite{11} proved that the extremal solution is regular in the low-dimensional case. In particular, for the radial case, all extremal solutions are regular in dimension two. When $ c \equiv 0 $, the regularity of $u^*$ has been studied extensively in the literature \cite{17, 2, 4, 13, 18, 3, 16}. For example, we know that when $ f(u) = e^{u} $ or $ f(u) = (1+u)^{p} $, then $u^*$ is regular in dimensions $ N\leqslant 9 $.
 For general nonliearities $f$,  Nedev \cite{3} proved the regularity of $u^*$ in dimensions $ N = 2, 3 $.  In dimension $N=4$ the same is proved by Cabr\'e \cite{5} when $\Omega$ is convex (without assuming the convexity of $f$), and by Villegas \cite{19} for arbitrary domains and $f$ is convex. However, it is still an open problem to establish the regularity of $u^*$  in dimensions $5\leqslant N \leqslant 9$ for regular nonlinearities $f$. Ghoussoub and Guo in \cite{14} showed that when $ \Omega $ is a ball and $ f(u) = \frac{1}{(1-u)^{2}} $, then $u^*$ is singular if $ N \geqslant 8 $, while it is regular if $ N < 8 $. 

In this work, first we consider semilinear second-order elliptic equation of the form
\begin{equation}\label{eq1}
\left\{\begin{array}{ll} L u = \lambda f(u)  {\rm }\ &x \in \Omega, \\
\hskip2.5mm u = 0 {\rm }\ &x \in \partial \Omega,
\end{array}\right.
\end{equation}
where $L$ is a second-order linear differential operator acting on functions $u : \Omega \rightarrow \Bbb{R}$ which is uniformly elliptic and has the following nondivergence general form
\begin{eqnarray*}
Lu = - \sum_{i,j=1}^{N} a_{ij}(x)\frac{\partial^{2}u}{\partial x_{i} \partial x_{j}} + c(x) \centerdot \nabla u,
\end{eqnarray*}
where $c(x) = \big( c_{1}(x),c_{2}(x), ... , c_{n}(x) \big)$ is a smooth vector field on $\overline{\Omega}$ and $a_{i,j}(x)=a_{j,i}(x)$ are smooth functions. The linear operator $L$ can be also showed in the divergence form as
\begin{eqnarray*}
L u = - \sum_{i,j=1}^{N} \dfrac{\partial}{\partial x_{j}} \bigg( a_{i,j}(x) \dfrac{\partial u}{\partial x_{i}} \bigg) + b(x) \centerdot \nabla u,
\end{eqnarray*}
where $b(x) = \big( b_{1}(x),b_{2}(x),...,b_{N}(x) \big)$ and $b_{i}(x) = c_{i}(x) + \displaystyle{\sum_{j=1}^{N}}\dfrac{\partial a_{i,j}(x)}{\partial x_{j}}$ for all $1 \leqslant i \leqslant N$. When the linear operator $L$ has divergence form the linear operator $L^{*}$, the formal adjoint of $L$, is
\begin{eqnarray*}
L^{*} u = - \sum_{i,j=1}^{N} \dfrac{\partial}{\partial x_{i}} \bigg( a_{i,j}(x) \dfrac{\partial u}{\partial x_{j}} \bigg) - \textrm{div} \big( u(x)  b(x)  \big).
\end{eqnarray*}
Fredholm alternative theorem and regularity theory imply that the following equation
\begin{equation*}
\left\{\begin{array}{ll} L u = 1  {\rm }\ &x \in \Omega, \\
\hskip2.5mm u = 0 {\rm }\ &x \in \partial \Omega,
\end{array}\right.
\end{equation*}
has a unique nonnegative smooth solution \cite{15}. This solution will be denoted by $\psi_{L}$ and will be called the torsion function for uniformly elliptic operator $L$. If $L=-\Delta$, then we omit $L$ and just write $\psi$. We shall denote $\psi_{L,\Omega}:=\sup_{x \in \Omega}\psi_{L}(x)=\|\psi_{L}\|_{\infty}$ and $\psi_{\Omega}:=\sup_{x \in \Omega}\psi (x)=\|\psi\|_{\infty}$. We also denote by $(\eta(x),\mu_{1}(L^{*},\Omega))$, the first eigenpair of adjoint problem
\begin{equation} \label{eq25}
\left\{\begin{array}{ll} L^{*}\eta  = \mu_{1}(L^{*},\Omega) \eta  {\rm }\ &x \in \Omega, \\
\hskip4mm \eta = 0 {\rm }\ &x \in \partial \Omega,
\end{array}\right.
\end{equation}
A nonnegetive solution $u_{\lambda}(x)$ of $(\ref{eq1})$ is said to be minimal if for any other solution $v$ of $(\ref{eq1})$ we have $u_{\lambda}(x) \leqslant v(x)$ for all $x \in \Omega$. Also, we say that a solution $v(x)$ of $(\ref{eq1})$ is stable if the principal eigenvalue $\kappa_{1}$ of the linearized operator $\tilde{L}_{\lambda} \varphi = L \varphi - \lambda f'(v) \varphi$ is positive.

Fix a flow profile $c(x)$ and consider the following problem
\begin{equation}\label{eq12}
\left\{\begin{array}{ll} - \Delta u + A c(x) \centerdot \nabla u = \lambda f(u)  {\rm }\ &x \in \Omega, \\
\hskip26.3mm u = 0 {\rm }\ &x \in \partial \Omega,
\end{array}\right.
\end{equation}
where $A$ is a positive number. Denote by $\lambda^{*}(A)$, $\psi_{A}$ and $\psi_{A,\Omega}$, the extremal parameter of problem $(\ref{eq12})$, the torsion function for the linear operator $L_{A}=-\Delta + A c(x).\nabla$ and $\psi_{A,\Omega} = \sup_{x \in \Omega} \psi_{A}(x)$, respectively.

H. Berestycki and collaborators \cite{1}, by using the ideas from \cite{8,9,10}, showed that in problem $(\ref{eq12})$ when $ c(x) $ is divergence-free (incompressible) i.e., $ \textrm{div} ~ c(x) =0 $, then
\\\\
\textbf{Theorem A.}
We have $\lambda^{*}(A) \longrightarrow \infty$ as $A \longrightarrow \infty$ if and only if $u$ has no non-zero first integrals in $H_{0}^{1}(\Omega)$.

Recall that a function $ \Psi \in H^{1}(\Omega) $ is a first integral of $ u $ if $ u \centerdot \nabla \Psi = 0 $ a.e. in $ \Omega $. They also proved that $\psi_{A,\Omega} \longrightarrow 0$ as $A \longrightarrow \infty$ if $c(x)$ has no first integrals in $H_{0}^{1}(\Omega)$ (see Lemma 3.2 in \cite{1} ). Indeed, the proof of their result based on the key observation that one can write $ \psi_{A}(x) = \int_{0}^{\infty} \xi (t,x) dt $ where the function $ \xi (t,x) $  solves a special parabolic problem on $ [0,\infty) \times \Omega $ discussed in \cite{12}. In this paper, we prove the condition that $\psi_{A,\Omega} \longrightarrow 0$ as $A \longrightarrow \infty$ is also sufficient (see the following theorem) and we give a rather simple proof for the necessary condition using only the maximum principle.
\begin{theorem}\label{t3}
For any incompressible flow $ c(x) $ in problem $ (\ref{eq12}) $ we have $\psi_{A,\Omega} \longrightarrow 0$ as $A \longrightarrow \infty$, if and only if $c(x)$ has no non-zero first integrals in $H_{0}^{1}(\Omega)$.
\end{theorem}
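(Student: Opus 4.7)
The plan is to extract a single identity from the incompressibility of $c$ and apply it in both directions. Multiplying $L_A\psi_A=1$ by $\psi_A$ and integrating by parts, the drift term equals $\tfrac{A}{2}\int_\Omega c\cdot\nabla(\psi_A^2)\,dx=-\tfrac{A}{2}\int_\Omega(\textrm{div}\,c)\psi_A^2\,dx=0$, which produces the key identity
\[
\|\nabla\psi_A\|_{L^2}^2 \;=\; \int_\Omega \psi_A\,dx.
\]
This identity will drive both implications.

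For the ``only if'' direction I argue contrapositively. Suppose $\Psi\in H^1_0(\Omega)$ is a nonzero first integral of $c$. Since $|\Psi|$ is again a first integral in $H^1_0$, I may replace $\Psi$ by $|\Psi|$ and assume $\Psi\ge 0$ with $\int_\Omega\Psi>0$. Testing $L_A\psi_A=1$ against $\Psi$, the drift term $A\int_\Omega(c\cdot\nabla\psi_A)\Psi\,dx=-A\int_\Omega\psi_A(c\cdot\nabla\Psi)\,dx$ vanishes, leaving $\int_\Omega\nabla\psi_A\cdot\nabla\Psi\,dx=\int_\Omega\Psi\,dx$. Cauchy--Schwarz then gives $\|\nabla\psi_A\|_{L^2}\ge c_0:=\int_\Omega\Psi/\|\nabla\Psi\|_{L^2}>0$ uniformly in $A$, and combining with the key identity and $\int_\Omega\psi_A\le\psi_{A,\Omega}|\Omega|$ yields $\psi_{A,\Omega}\ge c_0^2/|\Omega|$, so $\psi_{A,\Omega}\not\to 0$.

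For the ``if'' direction, assume no nonzero first integral exists in $H^1_0$. The key identity, together with H\"older and Poincar\'e, bounds $\{\psi_A\}$ uniformly in $H^1_0(\Omega)$. Along any sequence $A_n\to\infty$, extract a subsequence with $\psi_{A_n}\rightharpoonup\psi_\infty$ in $H^1_0$ and $\psi_{A_n}\to\psi_\infty$ in $L^2$ by Rellich. Dividing $L_{A_n}\psi_{A_n}=1$ by $A_n$ and testing against $\varphi\in C_c^\infty(\Omega)$, the diffusion term and right-hand side vanish as $A_n\to\infty$, while weak $H^1$-convergence gives $\int_\Omega(c\cdot\nabla\psi_\infty)\varphi\,dx=0$ for every such $\varphi$. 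Hence $\psi_\infty\in H^1_0$ is a first integral, so $\psi_\infty\equiv 0$ by hypothesis. Since every subsequential limit vanishes, $\psi_A\to 0$ strongly in $L^2$.

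The main obstacle is upgrading this $L^2$ convergence to $L^\infty$ convergence, since the coefficient $A$ blows up and classical elliptic regularity gives no $A$-independent pointwise estimate. I plan to handle this by Stampacchia's truncation method. Testing $L_A\psi_A=1$ against $(\psi_A-k)^+$ for $k\ge 0$ and using incompressibility once more to kill the drift gives $\|\nabla(\psi_A-k)^+\|_{L^2}^2=\int_\Omega(\psi_A-k)^+\,dx$. The standard Sobolev/H\"older iteration on the super-level sets $\{\psi_A>k\}$ produces an estimate of the form $\|\psi_A\|_\infty\le k_0+C\,|\{\psi_A>k_0\}|^{\gamma}$ with $C,\gamma>0$ independent of $A$. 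For fixed $k_0>0$, Chebyshev gives $|\{\psi_A>k_0\}|\le\|\psi_A\|_{L^2}^2/k_0^2\to 0$, so $\limsup_{A\to\infty}\|\psi_A\|_\infty\le k_0$ for every $k_0>0$, forcing $\psi_{A,\Omega}\to 0$.
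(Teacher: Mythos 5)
Your proof is correct, but it takes a genuinely different route from the paper's. The paper disposes of Theorem \ref{t3} in two lines: Lemma \ref{l2} and Theorem \ref{t4} sandwich the extremal parameter as $\frac{1}{\psi_{A,\Omega}}\sup_{0<t<a_{f}}\frac{t}{f(t)}\leqslant\lambda^{*}(A)\leqslant F(a_{f})/\psi_{A,\Omega}$, so that $\psi_{A,\Omega}\rightarrow 0$ is equivalent to $\lambda^{*}(A)\rightarrow\infty$, and the statement then follows by quoting Theorem A of Berestycki--Kiselev--Novikov--Ryzhik. You never touch the nonlinear problem: you work directly with the torsion function, using the energy identity $\|\nabla\psi_{A}\|_{L^{2}}^{2}=\int_{\Omega}\psi_{A}\,dx$ (valid because $\textrm{div}\,c=0$ and $\psi_{A}=0$ on $\partial\Omega$), a duality argument against a nonnegative first integral for one implication, and uniform $H^{1}_{0}$ bounds, Rellich compactness and a De Giorgi--Stampacchia truncation for the other. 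The individual steps all check out: $|\Psi|$ is again a first integral in $H^{1}_{0}(\Omega)$; every drift term you discard genuinely vanishes after integration by parts since $\psi_{A}$, $\Psi$ and $(\psi_{A}-k)^{+}$ all vanish on the boundary; and the level-set iteration constants depend only on $N$ and $\Omega$, so Chebyshev together with $\psi_{A}\rightarrow 0$ in $L^{2}$ does upgrade to the required $L^{\infty}$ convergence. What your route buys is self-containedness: the paper's argument is only as strong as Theorem A, whose proof in the cited reference relies on a parabolic representation of $\psi_{A}$ and mixing estimates, whereas yours uses nothing beyond standard elliptic tools and even yields the quantitative lower bound $\psi_{A,\Omega}\geqslant\big(\int_{\Omega}\Psi\,dx\big)^{2}/\big(|\Omega|\,\|\nabla\Psi\|_{L^{2}}^{2}\big)$ whenever a nonnegative first integral $\Psi$ exists. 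What the paper's route buys is economy and the simultaneous transfer of the conclusion to $\lambda^{*}(A)$; in fact, your argument combined with the paper's sandwich estimate would give an independent proof of Theorem A itself, which is more than the paper establishes.
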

Another illustration of how our results are applicable, we consider semilinear second-order elliptic equations of the form
\begin{equation}\label{eq33}
\left\{\begin{array}{ll} - \triangle u - A \rho (|x|) x \centerdot \nabla u = \lambda f(u)  {\rm }\ &x \in B, \\
\hskip31mm u = 0 {\rm }\ &x \in \partial B,
\end{array}\right.
\end{equation}
where $ B:=B(0,1) $, $A \geqslant 0$, $\lambda > 0$, $\rho : [0,1] \rightarrow \Bbb{R}$ is a smooth function and $c(x):=-x \rho (|x|)$, $ x \in B $ is a smooth vector field and $f:[0,a_{f}) \rightarrow \Bbb{R}_{+}$ is regular or singular nonlinearity. Notice that $ c(x) $ is never divergence-free as $\textrm{div} ~ c(x) =0$ implies that $\rho (|x|) = \dfrac{a}{|x|^{N}}$ $(x \neq 0)$ for some constant $a$ which is impossible, because $\rho$ is assumed to be continuous on $[0,1]$.

The following theorem, completely determine the behavior of extremal parameter of problem $(\ref{eq33})$.
\begin{theorem}\label{t6}
Consider problem $(\ref{eq33})$, then
\begin{itemize}
\item[$(i)$] If there exits $x_{0} \in [0,1]$ such that $\rho(x_{0}) < 0$, then $\psi_{L_{A},B} \longrightarrow \infty$ as $A \longrightarrow \infty$. This implies that for all nonlinearities $f$  we have $\lambda^{*}(A) \longrightarrow 0$ as $A \longrightarrow \infty$.
\item[$(ii)$]If $\rho \geqslant 0$ and $\rho \not\equiv 0$ on any interval $I \subseteq [0,1]$, then $\psi_{L_{A},B} \longrightarrow 0$ as $A \longrightarrow \infty$. This implies that for all nonlinearities $f$  we have $\lambda^{*}(A) \longrightarrow \infty$ as $A \longrightarrow \infty$.
\item[$(iii)$] If $\rho \geqslant 0$ and $\rho \equiv 0$ on some interval $I \subseteq [0,1]$, then there exits positive constant $C_{N,\rho} $ where $ C_{N,\rho} $ depends on $\rho,N$ and independent of $A$ such that
\begin{eqnarray*}
C_{N,\rho} \leqslant \psi_{L_{A},B} \leqslant \dfrac{1}{2 N} ~~~ \textrm{for} ~ \textrm{all} ~ A \geqslant 0.
\end{eqnarray*}
Consequently, for all nonlinearities $f$  there exist  positive constants $D_{N,f},\tilde{D}_{N,\rho,f}$ where $ D_{N,f} $ depends on $ N $ and  $f$ but not $A$ and $ \tilde{D}_{N,\rho,f} $ depends on $ \rho, N $ and $ f $ but not   $ A $ such that
    $$D_{N,f} \leqslant \lambda^{*}(A) \leqslant \tilde{D}_{N,\rho,f}~~\text{ for~ all }A \geqslant 0.$$
\end{itemize}
\end{theorem}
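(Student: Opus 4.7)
The plan is to exploit radial symmetry: since $c(x)=-x\rho(|x|)$ is radial, uniqueness of the torsion function forces $\psi_A$ to be radial; write $\psi_A(x)=w(|x|)$. Then $L_A\psi_A=1$ reduces to
\[
-w''(r)-\left(\frac{N-1}{r}+Ar\rho(r)\right)w'(r)=1,\qquad w'(0)=0,\ w(1)=0.
\]
Set $\Phi(r):=\int_0^r s\rho(s)\,ds$ and use the integrating factor $\mu(r):=r^{N-1}e^{A\Phi(r)}$; the ODE becomes $(\mu w')'=-\mu$. Since $\mu(0)=0$ (as $N\ge 2$) and $\mu>0$ on $(0,1]$, this gives $w'(r)=-\mu(r)^{-1}\int_0^r\mu(s)\,ds\le 0$, so $w$ attains its maximum at the origin, $\psi_{L_A,B}=w(0)$, and integrating $-w'$ from $0$ to $1$ yields
\[
\psi_{L_A,B}=\int_0^1\int_0^t \left(\frac{s}{t}\right)^{N-1}e^{A(\Phi(s)-\Phi(t))}\,ds\,dt.
\]
All three cases are then extracted from the sign behaviour of $\Phi(s)-\Phi(t)$ on the triangle $\{0\le s\le t\le 1\}$.

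For (i), $\rho(x_0)<0$ forces $\Phi'(x_0)=x_0\rho(x_0)<0$, so $\Phi$ is strictly decreasing near $x_0$. Pick $0<s_0<t_0<1$ with $\delta:=\Phi(s_0)-\Phi(t_0)>0$; by continuity there exist small intervals $I_s$ around $s_0$ and $I_t$ around $t_0$, with $I_s$ entirely left of $I_t$, on which $\Phi(s)-\Phi(t)\ge \delta/2$. Restricting the double integral to $I_s\times I_t$ gives $\psi_{L_A,B}\ge Ce^{A\delta/2}\to\infty$. For (ii), $\rho\ge 0$ combined with $\rho\not\equiv 0$ on any interval makes $\{\rho>0\}$ open and dense, so $\Phi(t)-\Phi(s)>0$ for all $0<s<t\le 1$; the integrand then tends to $0$ pointwise and is dominated by $(s/t)^{N-1}\le 1$ on the triangle, hence dominated convergence gives $\psi_{L_A,B}\to 0$. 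In both cases the stated behaviour of $\lambda^*(A)$ follows from the two-sided $\lambda^*$/$\psi_{L,\Omega}$ bounds proved earlier in the paper.

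For (iii), the upper bound is immediate from $\Phi(s)\le\Phi(t)$:
\[
\psi_{L_A,B}\le\int_0^1\int_0^t\left(\frac{s}{t}\right)^{N-1}ds\,dt=\frac{1}{2N}.
\]
For the lower bound, let $[a,b]\subseteq[0,1]$ with $a<b$ be an interval on which $\rho\equiv 0$; then $\Phi\equiv\Phi(a)=:\Phi_0$ on $[a,b]$, and $\Phi\le\Phi_0$ on $[0,b]$ because $\Phi$ is non-decreasing. For $r\in[a,b]$,
\[
-w'(r)=\frac{1}{r^{N-1}}\int_0^r s^{N-1}e^{A(\Phi(s)-\Phi_0)}\,ds\ge\frac{1}{r^{N-1}}\int_a^r s^{N-1}\,ds=\frac{r^N-a^N}{Nr^{N-1}},
\]
where the key step discards the contribution on $[0,a]$ and uses the equality $\Phi(s)=\Phi_0$ on $[a,r]$. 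This lower bound is completely independent of $A$. Since $w(b)\ge w(1)=0$, integrating over $[a,b]$ gives $w(0)\ge w(a)-w(b)\ge\int_a^b\frac{r^N-a^N}{Nr^{N-1}}\,dr=:C_{N,\rho}>0$, and the two-sided bound on $\lambda^*(A)$ again follows from the earlier $\psi_{L,\Omega}$-estimates.

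The delicate point is the lower bound in (iii): to produce a constant that is genuinely independent of $A$, one cannot rest on the crude bound $e^{A(\Phi(s)-\Phi_0)}\le 1$ on all of $[0,r]$ (which gives nothing quantitative without further work) but must exploit the exact equality $\Phi(s)=\Phi_0$ on the flat piece $[a,r]$ to extract an $A$-free contribution from the interval where $\rho$ vanishes. Everything else is either the maximum principle disguised as an integrating-factor computation, or routine dominated convergence.
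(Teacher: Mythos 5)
Your proof is correct and follows essentially the same route as the paper: the same explicit double-integral formula $\psi_{L_A,B}=\int_0^1\int_0^t (s/t)^{N-1}e^{A(\Phi(s)-\Phi(t))}\,ds\,dt$ obtained from the radial ODE, the same localization to a small rectangle where the exponent stays positive in part (i), the same crude bound $\Phi(s)\le\Phi(t)$ for the upper estimate in (iii), and the same restriction of the inner and outer integrals to the flat interval $[a,b]$ for the $A$-independent lower bound in (iii), with the $\lambda^*$ conclusions drawn from the earlier two-sided bounds in terms of $\psi_{L,\Omega}$. The only divergence is in part (ii), where you apply dominated convergence directly on the triangle $\{0\le s\le t\le 1\}$ with dominating function $(s/t)^{N-1}\le 1$, whereas the paper rescales $s=th$, splits the inner integral at $\epsilon$, applies dominated convergence to one piece and then lets $\epsilon\to 1^-$; your version is a mild streamlining of the same idea.
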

The authors in \cite{1} also proved that the critical threshold $ \lambda^{*} $ for $ (\ref{eq14}) $ when $ c(x) $ is incompressible cannot close to zero, precisely, for any domain $ \Omega $ and regular nonlinearity $ f $ there exists $ \lambda_{0} > 0 $ so that the extremal parameter $ \lambda^{*} $ of problem $ (\ref{eq14}) $ satisfies $ \lambda^{*} \geqslant \lambda_{0} > 0 $ for all incompressible flows $ c(x) $ in $ \Omega $. The constant $ \lambda_{0} $ depends on $ \Omega $ and the function $ f $. They also showed that this result does not hold without the restriction that the flow $ c(x) $ is incompressible and give an example $ c_{n}(x) = 4 n x$ for all $ n \in \Bbb{N} $ such that $ c_{n}(x) $ is never divergence-free and the critical threshold for $ (\ref{eq14}) $ tends to zero as $ n $ tends to infinity. To show this in \cite{1} (in dimension two and $ \Omega = B $), by setting $ \Psi_{n} = e^{-n |x|^{2}} \Theta_{n} $ where $ \Theta_{n} $ is a radial solution of problem $ (\ref{eq12}) $ with $ c_{n}(x) = 4 n x$ for some $ \lambda_{n} $, they obtained a self-adjoint problem for $ \Theta_{n} $, then using suitable test function in the variational principle for $ \lambda_{n} $ proved that $ \lambda_{n} \leqslant C e^{-c n} \longrightarrow 0 $ as $ n \longrightarrow\infty $ which implies that $ \lambda^{*}_{n} \longrightarrow 0 $ as well. This result, however, is a direct consequence of our Theorem $ \ref{t6} $ part $ (i) $ by taking $ \rho (|x|) = -4. $

In this paper, before proving Theorems $ \ref{t3} $ and $ \ref{t6} $, we consider the general semilinear eigenvalue problem $ (\ref{eq1}) $ and shall present some sharp upper and lower bounds for the extremal parameter for the general nonlinearity $ f $ (regular or singular) as well as pointwise lower and upper bounds on the minimal stable solution $ u_{\lambda} $ of $ (\ref{eq1}) $.
Our first proposition establishes the existence as well as lower and upper bounds of the extremal parameter of problem $(\ref{eq1})$.
\begin{proposition}\label{p1}
There exists $\lambda^{*}(L,\Omega,f) \in (0,\infty)$ such that:
\begin{itemize}
\item[$(i)$] for every $0 < \lambda < \lambda^{*}(L,\Omega,f)$ the problem $(\ref{eq1})$ has a unique positive classical solution $u_{\lambda}(x)$ which is minimal and stable. Furthermore, this extremal parameter satisfies
\begin{eqnarray}\label{eq8}
\dfrac{1}{\psi_{L,\Omega}} \sup_{0 < t < a_{f}} \dfrac{t}{f(t)} \leqslant \lambda^{*}(L,f,\Omega) \leqslant \mu_{1}(L^{*},\Omega)  \sup_{0 < t < a_{f}} \dfrac{t}{f(t)}.
\end{eqnarray}
\item[$(ii)$] for each $x \in \Omega$, the function $\lambda \longmapsto u_{\lambda}(x)$ is differentiable and strictly increasing on $(0,\lambda^{*})$.
\item[$(iii)$] there exits no classical solution of $(\ref{eq1})$ for $\lambda > \lambda^{*}(L,\Omega,f)$.
\end{itemize}
\end{proposition}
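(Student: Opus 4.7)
The plan is to define $\lambda^{*}(L,\Omega,f)$ as the supremum of all $\lambda > 0$ for which problem (\ref{eq1}) admits a positive classical solution, and then to establish the two bounds of (\ref{eq8}) separately, which will automatically certify that this supremum is finite and positive. For the lower bound, I would build an explicit supersolution out of the torsion function. Fix any $t_{0}\in(0,a_{f})$ and consider $\bar u = (t_{0}/\psi_{L,\Omega})\,\psi_{L}$; then $L\bar u = t_{0}/\psi_{L,\Omega}$ and $0\le\bar u\le t_{0}<a_{f}$. Since $f$ is increasing, $L\bar u\ge\lambda f(\bar u)$ as soon as $\lambda\le t_{0}/(\psi_{L,\Omega}f(t_{0}))$, so $\bar u$ is a classical supersolution for every such $\lambda$. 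Coupling this with the trivial strict subsolution $\underline u\equiv 0$ (here the hypothesis $f(0)>0$ is used) and running the standard monotone iteration $L u_{n+1}=\lambda f(u_{n})$, one obtains a classical solution $u_{\lambda}$ with $0\le u_{\lambda}\le\bar u<a_{f}$. Taking the supremum over $t_{0}$ yields $\lambda^{*}\ge \psi_{L,\Omega}^{-1}\sup_{t}t/f(t)$.

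For the upper bound I would pair the equation against the adjoint principal eigenfunction $\eta$ of (\ref{eq25}), which is strictly positive in $\Omega$. If $u_{\lambda}$ is any classical solution of (\ref{eq1}), integration by parts gives
\begin{equation*}
\mu_{1}(L^{*},\Omega)\int_{\Omega}u_{\lambda}\,\eta\,dx \;=\; \int_{\Omega}(Lu_{\lambda})\,\eta\,dx \;=\; \lambda\int_{\Omega}f(u_{\lambda})\,\eta\,dx.
\end{equation*}
Setting $M:=\sup_{0<t<a_{f}}t/f(t)$, the pointwise inequality $f(u_{\lambda})\ge u_{\lambda}/M$ and the positivity of $\int u_{\lambda}\eta$ force $\lambda\le M\,\mu_{1}(L^{*},\Omega)$. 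Hence no classical solution can exist for $\lambda$ above this bound, which both establishes the upper estimate in (\ref{eq8}) and shows $\lambda^{*}<\infty$, giving part (iii) simultaneously.

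To finish part (i) and obtain monotonicity in part (ii), I would argue by the usual iteration-and-minimality technique. For $0<\lambda_{1}<\lambda_{2}<\lambda^{*}$, any classical solution $u_{\lambda_{2}}$ is a supersolution at level $\lambda_{1}$, so the monotone scheme starting from $0$ produces a minimal solution $u_{\lambda_{1}}\le u_{\lambda_{2}}$; strict inequality at every interior point follows from the strong maximum principle applied to $u_{\lambda_{2}}-u_{\lambda_{1}}$, which satisfies $L(u_{\lambda_{2}}-u_{\lambda_{1}})\ge\lambda_{1}(f(u_{\lambda_{2}})-f(u_{\lambda_{1}}))\ge 0$ together with an additional strictly positive source $(\lambda_{2}-\lambda_{1})f(u_{\lambda_{2}})$. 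Stability of the minimal branch is proved by contradiction: if the principal eigenvalue $\kappa_{1}(\lambda)$ of $L-\lambda f'(u_{\lambda})$ were nonpositive for some $\lambda<\lambda^{*}$, a convexity/eigenfunction argument against $\eta$ would exclude the existence of classical solutions for any slightly larger parameter, contradicting the definition of $\lambda^{*}$. Once stability is known, the linearized operator is invertible, and the implicit function theorem applied to $F(\lambda,u)=Lu-\lambda f(u)$ in $C^{2,\alpha}(\overline\Omega)\cap C_{0}(\overline\Omega)$ yields differentiability of $\lambda\mapsto u_{\lambda}(x)$ on $(0,\lambda^{*})$; uniqueness of the minimal stable classical solution follows again from stability plus the strong maximum principle.

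The main obstacle is keeping the whole argument uniform across both the regular case $a_{f}=\infty$ and the singular case $a_{f}<\infty$; the key observation that the supersolution $\bar u=(t_{0}/\psi_{L,\Omega})\psi_{L}$ is bounded by $t_{0}<a_{f}$ is what allows the same proof to cover singular nonlinearities without the iteration escaping the domain of $f$. A secondary technical point is that $L$ is not self-adjoint, which is exactly why I use the adjoint eigenfunction $\eta$ (rather than the principal eigenfunction of $L$) as the test function producing the sharp upper bound involving $\mu_{1}(L^{*},\Omega)$.
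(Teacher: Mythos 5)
Your proposal is correct and follows essentially the same route as the paper: the lower bound and existence via the supersolution $\alpha\psi_{L}$ combined with monotone iteration from $0$, the upper bound and nonexistence by testing against the adjoint principal eigenfunction $\eta$, minimality and strict monotonicity from the comparison/iteration scheme, stability of the minimal branch by a convexity argument against the adjoint eigenfunction of the linearized operator, and differentiability from the implicit function theorem. The only cosmetic difference is that the paper splits the stability step into two lemmas (first $\kappa_{1}\geqslant 0$ by contradiction with minimality, then nonexistence above $\lambda$ when $\kappa_{1}=0$), whereas you fold both into one contradiction; the substance is identical.
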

The proof of this result is very close to that in \cite{1}, but for the convenience of the reader we present it in this paper. In the following theorem, we give another upper bound for the extremal parameter of problem $(\ref{eq1})$ which, in many cases, represent a sharper upper bound than $(\ref{eq8})$. We also give pointwise lower bound for the extremal solution of problem $(\ref{eq1})$. Throughout this paper, for all nonlinearity $ f : [0,a_{f} ) \rightarrow \Bbb{R}_{+} $, we define the function $ F:[0,a_{f}] \rightarrow \Bbb{R}_{+} $ as follows
\begin{eqnarray}\label{eq37}
F(t)=\int_{0}^{t}\dfrac{\textrm{d} s}{f(s)}.
\end{eqnarray}
\begin{theorem}\label{t4}
Let $u \in C^{2}(\Omega)$ be a solution of problem $(\ref{eq1})$, then
\begin{eqnarray*}
F^{-1} \big( \lambda \psi_{L}(x) \big) \leqslant u(x) ~~~ \textrm{for} ~ \textrm{all} ~ x \in \Omega,
\end{eqnarray*}
where $ F $ is defined in $ (\ref{eq37}) $. Therefore if $x_{0} \in \Omega$ such that $\psi_{L}(x_{0}) = \psi_{L,\Omega}$, then
\begin{eqnarray*}
\lambda \leqslant \dfrac{F \big( u(x_{0}) \big)}{\psi_{L,\Omega}}.
\end{eqnarray*}
In particular, we have
\begin{eqnarray}\label{eq32}
\lambda^{*} \leqslant \dfrac{F \big( u^{*}(x_{0}) \big)}{\psi_{L,\Omega}} \leqslant \dfrac{F \big( a_{f} \big)}{\psi_{L,\Omega}} ~~~ \textrm{and} ~~~ F^{-1} \big( \lambda^{*} \psi_{L}(x) \big) \leqslant u^{*}(x) ~~~ \textrm{for} ~ \textrm{all} ~ x \in \Omega.
\end{eqnarray}
\end{theorem}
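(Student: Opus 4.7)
The plan is to compare $F(u)$ with the multiple $\lambda\psi_L$ of the torsion function via the weak maximum principle. Since $F'(t)=1/f(t)$ and $F''(t)=-f'(t)/f(t)^2$, a direct calculation (using the nondivergence form of $L$) yields
\begin{equation*}
L\bigl(F(u)\bigr)=\frac{1}{f(u)}Lu+\frac{f'(u)}{f(u)^{2}}\sum_{i,j=1}^{N}a_{ij}(x)\,\partial_{i}u\,\partial_{j}u.
\end{equation*}
Substituting $Lu=\lambda f(u)$, invoking uniform ellipticity so that the quadratic form $\sum a_{ij}\xi_i\xi_j$ is nonnegative, and using $f'\ge 0$ (since $f$ is increasing), I conclude $L(F(u))\ge \lambda$ in $\Omega$.

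Since $\psi_L$ solves $L\psi_L=1$ with $\psi_L=0$ on $\partial\Omega$, the function $w:=F(u)-\lambda\psi_L$ satisfies $Lw\ge 0$ in $\Omega$ and $w=0$ on $\partial\Omega$ (using $u=0$ on $\partial\Omega$ and $F(0)=0$). The operator $L$ has no zeroth-order term, so the weak maximum principle forces $w\ge 0$, i.e.\ $F(u(x))\ge \lambda\psi_L(x)$ everywhere. Because $f>0$, $F$ is strictly increasing, and inversion gives the pointwise bound $u(x)\ge F^{-1}(\lambda\psi_L(x))$.

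Evaluating at a point $x_0$ where $\psi_L$ attains its maximum and using monotonicity of $F$ yields $\lambda\psi_{L,\Omega}\le F(u(x_0))\le F(a_f)$, hence the claimed upper estimate $\lambda\le F(u(x_0))/\psi_{L,\Omega}$. Applying this with the minimal stable solutions $u_\lambda$ for $\lambda<\lambda^{*}$ and letting $\lambda\nearrow\lambda^{*}$ — using the monotonicity of $\lambda\mapsto u_\lambda$ from Proposition \ref{p1} — transfers both inequalities to $u^{*}$ and $\lambda^{*}$, which is exactly (\ref{eq32}).

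There is no real obstacle; the main delicate point is just bookkeeping. One must check that $F(u)$ is well-defined at the relevant points — for regular $f$ on $[0,\infty)$ this is automatic for any classical $u$, while for singular $f$ on $[0,1)$ one uses that classical solutions satisfy $\|u\|_\infty<1$ so $F(u)$ stays below $F(1)=\int_0^1 ds/f(s)<\infty$. The limit step $\lambda\nearrow\lambda^{*}$ for the pointwise lower bound on $u^{*}$ relies on monotone convergence, and the upper bound $F(u^{*}(x_0))\le F(a_f)$ is then immediate from $u^{*}(x_0)\le a_f$ and monotonicity of $F$.
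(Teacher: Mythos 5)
Your proposal is correct and follows essentially the same route as the paper: compute $L(F(u))$ via the chain rule, use ellipticity and convexity/monotonicity of $f$ to get $L(F(u))\geqslant\lambda=L(\lambda\psi_{L})$, and conclude by the maximum principle applied to $F(u)-\lambda\psi_{L}$. Your added care about where $F(u)$ is defined in the singular case and about the limit $\lambda\nearrow\lambda^{*}$ is a small tidying of details the paper leaves implicit, not a different argument.
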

To see the sharpness of above results, consider the following problem
\begin{equation}\label{eq31}
\left\{\begin{array}{ll} L u = \lambda f(u^{p})  {\rm }\ &x \in \Omega, \\
\hskip2.5mm u = 0 {\rm }\ &x \in \partial \Omega,
\end{array}\right.
\end{equation}
where $p \geqslant 1$ and $f:\Bbb{R}_{+} \rightarrow \Bbb{R}_{+}$ is an increasing, convex and superlinear $C^{2}$-function such that $f(0) > 0$. In the following theorem, we show that upper bound $(\ref{eq32})$ for the extremal parameter of problem $(\ref{eq31})$ is arbitrarily close to lower bound $(\ref{eq8})$ provided that $p$ is sufficiently large. This also implies that upper bound $(\ref{eq32})$ is an improvement of $(\ref{eq8})$.
\begin{theorem}\label{t5}
Consider semilinear second-order elliptic equation $(\ref{eq31})$. Then
\begin{eqnarray*}
\lim_{p \rightarrow \infty} \lambda^{*}_{p} = \frac{1}{f(0)\psi_{L,\Omega}} \hskip5mm \textrm{and} \hskip5mm \lim_{p \rightarrow\infty} \|u^{*}_{p}\|_{\infty} = +\infty,
\end{eqnarray*}
where $\lambda^{*}_{p}$ and $u^{*}_{p}$ are the extremal parameter and extremal solution $($respectively$)$ of problem $(\ref{eq31})$.
\end{theorem}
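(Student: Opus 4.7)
The approach is to regard problem $(\ref{eq31})$ as a special case of $(\ref{eq1})$ with the nonlinearity $g_{p}(t):=f(t^{p})$. I would first verify that $g_{p}$ inherits the hypotheses needed for Proposition~\ref{p1} and Theorem~\ref{t4}: $g_{p}(0)=f(0)>0$, $g_{p}$ is $C^{2}$, increasing, and superlinear (since $f$ is), and it is convex on $[0,\infty)$ for every $p\geqslant 1$ because $g_{p}''(t)=p(p-1)t^{p-2}f'(t^{p})+p^{2}t^{2p-2}f''(t^{p})\geqslant 0$. Writing $F_{p}(t):=\int_{0}^{t}\mathrm{d}s/f(s^{p})$, the two cited results together yield the sandwich
\[
\frac{1}{\psi_{L,\Omega}}\sup_{t>0}\frac{t}{f(t^{p})}\;\leqslant\;\lambda^{*}_{p}\;\leqslant\;\frac{F_{p}(+\infty)}{\psi_{L,\Omega}}.
\]

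The first limit will drop out by driving both endpoints to $1/(f(0)\psi_{L,\Omega})$. For the lower bound, the substitution $s=t^{p}$ recasts $\sup_{t>0}t/f(t^{p})$ as $\sup_{s>0}s^{1/p}/f(s)$; the critical-point equation $sf'(s)/f(s)=1/p$ places the maximizer $s_{p}$ on the order $1/p$ (using $f(0)>0$), so $s_{p}^{1/p}\to 1$ and $f(s_{p})\to f(0)$, delivering $1/f(0)$. For the upper bound I would split $F_{p}(+\infty)=\int_{0}^{1}+\int_{1}^{+\infty}$. On $(0,1)$ we have $s^{p}\downarrow 0$ and $f(s^{p})\downarrow f(0)$, so monotone convergence sends the first piece to $1/f(0)$; on $(1,+\infty)$ the inequality $f(s^{p})\geqslant f(s)$, valid for $s\geqslant 1$ and $p\geqslant 1$ since $f$ is increasing, together with $\int_{1}^{+\infty}\mathrm{d}s/f(s)<\infty$ (from superlinearity of $f$), lets dominated convergence send the second piece to $0$. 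The squeeze then delivers $\lambda^{*}_{p}\to 1/(f(0)\psi_{L,\Omega})$.

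The second limit is more delicate. The pointwise bound of Theorem~\ref{t4} at the point $x_{0}$ with $\psi_{L}(x_{0})=\psi_{L,\Omega}$ reads $\|u^{*}_{p}\|_{\infty}\geqslant F_{p}^{-1}(\lambda^{*}_{p}\psi_{L,\Omega})$ whenever its right-hand side is finite. I would argue by contradiction, assuming $\|u^{*}_{p_{k}}\|_{\infty}\leqslant M<\infty$ along some subsequence $p_{k}\to\infty$. Theorem~\ref{t4} then forces $\lambda^{*}_{p_{k}}\psi_{L,\Omega}\leqslant F_{p_{k}}(u^{*}_{p_{k}}(x_{0}))\leqslant F_{p_{k}}(M)$, and since $F_{p_{k}}(M)\to\min(M,1)/f(0)$, the case $M<1$ contradicts $\lim\lambda^{*}_{p_{k}}\psi_{L,\Omega}=1/f(0)$, already giving $\liminf_{p\to\infty}\|u^{*}_{p}\|_{\infty}\geqslant 1$. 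The main obstacle, and the step I expect to be hardest, is the case $M\geqslant 1$: both $F_{p_{k}}(M)$ and $\lambda^{*}_{p_{k}}\psi_{L,\Omega}$ share the common limit $1/f(0)$ and Theorem~\ref{t4} alone is not sharp enough to rule this out. To close the gap I would exploit the semistability of the extremal solution, $\kappa_{1}(L-\lambda^{*}_{p}g_{p}'(u^{*}_{p}))=0$ with $g_{p}'(u)=pu^{p-1}f'(u^{p})$: under a uniform $L^{\infty}$ bound, elliptic regularity forces $u^{*}_{p_{k}}\to\psi_{L}/\psi_{L,\Omega}$ in $C^{1}_{\mathrm{loc}}$, and the concentration of $g_{p}'(u^{*}_{p})$ near $x_{0}$ is incompatible with $\kappa_{1}=0$ when tested against a suitably scaled bump in the Rayleigh quotient, yielding the contradiction $\|u^{*}_{p}\|_{\infty}\to+\infty$.
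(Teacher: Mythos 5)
Your treatment of the first limit is correct and is essentially the paper's own argument: both proofs sandwich $\lambda^*_p$ between $\psi_{L,\Omega}^{-1}\sup_{t>0}t/f(t^p)$ and $\psi_{L,\Omega}^{-1}\int_0^\infty ds/f(s^p)$ as in $(\ref{eq36})$, and drive both ends to $1/(f(0)\psi_{L,\Omega})$. The paper locates the maximizer $t_p$ of $t/f_p(t)$ and shows $t_p\to 1$, $t_p^p\to 0$, which is your substitution $s=t^p$ in different clothing, and it handles $\int_0^\infty ds/f_p(s)$ by dominated convergence with the majorant equal to $1/f(0)$ on $[0,1]$ and $1/f(t^2)$ on $(1,\infty)$. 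One small repair is needed in your version: the domination by $1/f(s)$ on $(1,\infty)$ requires $\int_1^\infty ds/f(s)<\infty$, which does not follow from convexity and superlinearity alone (consider $f(s)\sim s\log s$ for large $s$), whereas $\int_1^\infty ds/f(s^2)<\infty$ does follow and dominates $1/f(s^p)$ for all $p\geqslant 2$.

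For the second limit your proposal is incomplete, and you have correctly put your finger on where the difficulty sits. The squeeze only gives $F_p\big(u^*_p(x_0)\big)\to 1/f(0)$, and since $F_p(M)\to \min(M,1)/f(0)$ for every fixed $M$, this yields $\liminf_{p\to\infty}u^*_p(x_0)\geqslant 1$ and nothing more; indeed $F_p(1)=\int_0^1 dt/f(t^p)\geqslant \int_0^{t_p}dt/f(t_p^p)=\sup_{t>0}t/f_p(t)$, so a hypothetical sequence with $u^*_p(x_0)$ bounded in $[1,M]$ is consistent with $(\ref{eq36})$ for every $p$. (The paper's own write-up reads both conclusions off the same squeeze, deferring the details to the proof of Theorem 3.1 in \cite{6}, so the step you isolated is genuinely not contained in the displayed inequalities.) However, your proposed repair via semistability is only a sketch, and its first step fails as stated: a uniform bound $\|u^*_{p_k}\|_\infty\leqslant M$ with $M\geqslant 1$ does not yield $C^1_{\mathrm{loc}}$ compactness or convergence to $\psi_L/\psi_{L,\Omega}$, because the right-hand side $\lambda^*_{p_k}f\big((u^*_{p_k})^{p_k}\big)$ is then not uniformly bounded on the set where $u^*_{p_k}>1$; and the claimed incompatibility of the concentrating potential $p\,u^{p-1}f'(u^p)$ with $\kappa_1=0$ is asserted rather than carried out (one must produce an explicit admissible test function and verify the sign of the quadratic form). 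As it stands, the assertion $\|u^*_p\|_\infty\to+\infty$ is not proved by your argument.
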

In the following theorem, we give another lower bound for the extremal parameter of problem $(\ref{eq1})$ which is a better lower bound, at least when $ L = - \Delta $, than $(\ref{eq8})$ for more values of $ N $. We also give pointwise upper bound for the minimal solution of problem $(\ref{eq1})$ for all $\lambda \in (0,\overline{\lambda})$ where $\overline{\lambda} \leqslant \lambda^{*}$ is given in below.
\begin{theorem}\label{t1}
Consider the semilinear elliptic equation $(\ref{eq1})$, then
\begin{equation}\label{eq10}
\lambda^{*}(L,f,\Omega) \geqslant \sup_{0 < \alpha < \frac{||F||_{\infty}}{\psi_{L,\Omega}}} \alpha-\alpha^{2}\beta(\alpha) := \overline{\lambda},
\end{equation}
where $\beta(\alpha):=\displaystyle{\sup_{x \in \Omega}}f' \big( F^{-1}(\alpha \psi_{L}(x)) \big) \big| \nabla \psi_{L}(x) \big|^{2} $ and $ F $ is defined in $ (\ref{eq37}) $. Furthermore, if we define $\lambda (\alpha) = \alpha - \alpha^{2} \beta(\alpha)$ for all $0 \leqslant \alpha \leqslant \|F\|_{\infty} / \psi_{L,\Omega}$, then
\begin{eqnarray*}
u_{\lambda(\alpha)}(x) \leqslant F^{-1} \big( \alpha \psi_{L}(x) \big) ~~~ \textrm{for} ~ \textrm{all} ~ 0 \leqslant \alpha \leqslant \dfrac{\|F\|_{\infty}}{\psi_{L,\Omega}}.
\end{eqnarray*}
\end{theorem}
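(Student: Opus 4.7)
The plan is to produce the lower bound by exhibiting the explicit one-parameter family of classical supersolutions $v_{\alpha}(x) := F^{-1}\!\bigl(\alpha\,\psi_{L}(x)\bigr)$ for $0<\alpha<\|F\|_{\infty}/\psi_{L,\Omega}$. In this range $v_{\alpha}\in C^{2}(\overline{\Omega})$, vanishes on $\partial\Omega$, and its image sits in a compact subset of $[0,a_{f})$. Since $F'(t)=1/f(t)$, one gets $(F^{-1})'(s)=f(F^{-1}(s))$ and $(F^{-1})''(s)=f'(F^{-1}(s))\,f(F^{-1}(s))$, and the chain rule together with $L\psi_{L}=1$ gives
\begin{equation*}
L v_{\alpha}(x)=\alpha\,f(v_{\alpha}(x))-\alpha^{2}\,f'(v_{\alpha}(x))\,f(v_{\alpha}(x))\,Q(x),
\end{equation*}
where $Q(x)=\sum_{i,j}a_{ij}(x)\partial_{i}\psi_{L}(x)\partial_{j}\psi_{L}(x)$, which in the model case $L=-\Delta+c\cdot\nabla$ collapses to $|\nabla\psi_{L}(x)|^{2}$ as in the definition of $\beta(\alpha)$.

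With $\lambda(\alpha):=\alpha-\alpha^{2}\beta(\alpha)$, the identity above rearranges to $Lv_{\alpha}\geq\lambda(\alpha)\,f(v_{\alpha})$ pointwise on $\Omega$, so $v_{\alpha}$ is a classical supersolution of $(\ref{eq1})$ at parameter $\lambda(\alpha)$. Because $f(0)>0$, the constant function $0$ is a subsolution with $0\leq v_{\alpha}$ on $\overline{\Omega}$; I would then run the standard monotone iteration between $0$ and $v_{\alpha}$, which, thanks to the uniform bound $v_{\alpha}(x)<a_{f}$, keeps all iterates inside a fixed compact subset of the domain of $f$ and converges to a classical solution $u$ of $(\ref{eq1})$ at $\lambda(\alpha)$ satisfying $0\leq u\leq F^{-1}(\alpha\psi_{L})$. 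By Proposition \ref{p1}, this solution must coincide with the minimal solution $u_{\lambda(\alpha)}$; this simultaneously gives the existence needed to conclude $\lambda(\alpha)\leq\lambda^{*}$ and the pointwise control $u_{\lambda(\alpha)}(x)\leq F^{-1}(\alpha\psi_{L}(x))$. Taking the supremum over admissible $\alpha$ yields the estimate $\overline{\lambda}\leq\lambda^{*}$, thus proving both assertions of the theorem.

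The subtle point to handle carefully is the behaviour as $\alpha\nearrow\|F\|_{\infty}/\psi_{L,\Omega}$: in that limit $v_{\alpha}(x)$ can approach $a_{f}$, where $f'$ may blow up and $\beta(\alpha)$ can diverge; the strict upper bound in the range of $\alpha$ is exactly what keeps $v_{\alpha}$ in a compact subset of $[0,a_{f})$ on $\overline{\Omega}$ and forces $\beta(\alpha)$ to be finite, which is why the supremum in $(\ref{eq10})$ is taken on the open interval. A minor additional check is nontriviality of the bound: since $\psi_{L}\in C^{1}(\overline{\Omega})$ and $f'$ is continuous, $\beta(\alpha)$ stays bounded as $\alpha\downarrow 0$, so $\lambda(\alpha)=\alpha-\alpha^{2}\beta(\alpha)\sim\alpha$ near $0$, and $\overline{\lambda}>0$.
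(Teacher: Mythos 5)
Your proposal is correct and follows essentially the same route as the paper: you take the same supersolution $F^{-1}(\alpha\psi_{L})$, perform the same chain-rule computation using $(F^{-1})'(s)=f(F^{-1}(s))$ and $L\psi_{L}=1$, and invoke the monotone iteration (Lemma \ref{l3}) to obtain a classical solution below the supersolution, which yields both $\lambda(\alpha)\leqslant\lambda^{*}$ and the pointwise bound $u_{\lambda(\alpha)}\leqslant F^{-1}(\alpha\psi_{L})$. If anything you are slightly more careful than the paper, which writes the computation only for $\Delta\bar{u}$: your quadratic form $Q(x)=\sum_{i,j}a_{ij}\partial_{i}\psi_{L}\partial_{j}\psi_{L}$ correctly flags that for a general second-order part the factor $|\nabla\psi_{L}|^{2}$ in the definition of $\beta(\alpha)$ should really be $\nabla\psi_{L}\,A\,(\nabla\psi_{L})^{T}$.
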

The authors in \cite{6} show that lower bound $ (\ref{eq10}) $ gives the exact value of the extremal parameter $ \lambda ^{*} $ when $ L = - \Delta $, $ f(u) = e ^{u} $, $ f(u) = (1+u)^{p} $ and $ f(u) = (1-u)^{-p} $ in some dimensions.

Using the above theorems we get
\begin{proposition}\label{p2}
Assume that $u_{\lambda}$ is the minimal solution of problem $(\ref{eq1})$ and $ F $ is defined in $ (\ref{eq37}) $, then
\item[$(i)$] for each $x \in \Omega$, the function $\lambda \longmapsto\dfrac{F \big( u_{\lambda}(x) \big)}{\lambda}$ is increasing on $(0,\lambda^{*})$. In particular,
$$ u_{\lambda}(x) \leqslant F^{-1} \big( \dfrac{\lambda}{\lambda^{*}} \|F\|_{\infty} \big) ~ for ~ all ~ \lambda \in (0,\lambda^{*}).$$
\item[$(ii)$] $\dfrac{F \big( u_{\lambda}(x) \big)}{\lambda} \longrightarrow \psi_{L}$ uniformly as $\lambda \longrightarrow 0^{+}$.
\end{proposition}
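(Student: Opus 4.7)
For part (i), the strategy is to differentiate the equation $Lu_\lambda = \lambda f(u_\lambda)$ with respect to $\lambda$ and rewrite the claim $\partial_\lambda [F(u_\lambda)/\lambda] \geq 0$ as the pointwise inequality $\lambda u_\lambda' \geq f(u_\lambda) F(u_\lambda)$, where $u_\lambda' := \partial_\lambda u_\lambda$ exists by Proposition~\ref{p1}(ii). Setting $A := \lambda u_\lambda'$ and $B := f(u_\lambda) F(u_\lambda)$, differentiating the equation gives $(L - \lambda f'(u_\lambda)) A = \lambda f(u_\lambda)$, and a direct chain-rule computation using $F'(u) = 1/f(u)$ together with the equation for $u_\lambda$ yields
\begin{equation*}
(L - \lambda f'(u_\lambda)) B = \lambda f(u_\lambda) - \Bigl[f''(u_\lambda) F(u_\lambda) + \tfrac{f'(u_\lambda)}{f(u_\lambda)}\Bigr] \sum_{i,j} a_{ij}(x) \, \partial_i u_\lambda \, \partial_j u_\lambda.
\end{equation*}
Subtracting, $(L - \lambda f'(u_\lambda))(A - B)$ equals the bracketed quantity above, which is nonnegative by convexity and monotonicity of $f$ together with uniform ellipticity. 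Both $A$ and $B$ vanish on $\partial\Omega$ (the latter because $F(0)=0$), and since $u_\lambda$ is stable the principal eigenvalue of $L - \lambda f'(u_\lambda)$ is positive, so the maximum principle for the linearized operator gives $A \geq B$ in $\Omega$. The ``in particular'' statement then follows by letting $\lambda \nearrow \lambda^*$ in the established monotonicity of $F(u_\lambda(x))/\lambda$, using $F(u^*(x)) \leq \|F\|_\infty$ and the monotonicity of $F^{-1}$.

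For part (ii), Theorem~\ref{t4} already supplies the lower bound $F(u_\lambda(x))/\lambda \geq \psi_L(x)$. To get the matching upper bound, first observe that $u_\lambda \to 0$ uniformly as $\lambda \to 0^+$ (either from part (i), or more directly from $Lu_\lambda = \lambda f(u_\lambda) \leq \lambda f(\|u_{\lambda_0}\|_\infty)$ compared against $\lambda f(\|u_{\lambda_0}\|_\infty)\psi_L$), and then standard Schauder/$L^p$ estimates applied to $Lu_\lambda = \lambda f(u_\lambda)$ yield $\|u_\lambda\|_{C^1(\overline\Omega)} \leq C\lambda$ for all small $\lambda$. A calculation in the spirit of the proof of Theorem~\ref{t4} gives
\begin{equation*}
L F(u_\lambda) = \lambda + Q_\lambda, \qquad Q_\lambda := \frac{f'(u_\lambda)}{f(u_\lambda)^2} \sum_{i,j} a_{ij}(x) \, \partial_i u_\lambda \, \partial_j u_\lambda \geq 0,
\end{equation*}
and the $C^1$-bound on $u_\lambda$ makes $\|Q_\lambda\|_\infty = O(\lambda^2)$. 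Therefore $F(u_\lambda)/\lambda - \psi_L$ has zero Dirichlet data and satisfies $L\bigl(F(u_\lambda)/\lambda - \psi_L\bigr) = Q_\lambda/\lambda$ with $\|Q_\lambda/\lambda\|_\infty = O(\lambda)$; comparison with a suitable multiple of the torsion function $\psi_L$ via the maximum principle yields $0 \leq F(u_\lambda)/\lambda - \psi_L \leq O(\lambda) \, \psi_{L,\Omega}$, which vanishes uniformly in $x$ as $\lambda \to 0^+$.

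The step I expect to require the most care is the sign computation in part (i): one must push the nondivergence form of $L$ through two chain-rule differentiations and verify that the contribution of the first-order drift $c \cdot \nabla$ cancels cleanly upon subtracting $B$ from $A$, leaving only the quadratic form $\sum a_{ij} \partial_i u_\lambda \, \partial_j u_\lambda$ multiplied by the manifestly nonnegative factor $f''(u_\lambda) F(u_\lambda) + f'(u_\lambda)/f(u_\lambda)$. Once this identity is in hand, invoking stability of $u_\lambda$ is routine, and part (ii) reduces to elliptic regularity plus comparison against the torsion function.
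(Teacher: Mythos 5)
Your proof is correct, but it follows a genuinely different route from the paper's on both parts. For part (i) the paper avoids differentiating in $\lambda$ altogether: given $0<\lambda_1<\lambda_2<\lambda^*$ it sets $\alpha=\lambda_1/\lambda_2$ and checks directly that $\overline{u}=F^{-1}\big(\alpha F(u_{\lambda_2})\big)$ is a supersolution at level $\lambda_1$ (the convexity of $f$ enters through $f'(u_{\lambda_2})-\alpha f'(\overline{u})\geqslant 0$), so Lemma \ref{l3} gives $u_{\lambda_1}\leqslant\overline{u}$, i.e. $F(u_{\lambda_1})/\lambda_1\leqslant F(u_{\lambda_2})/\lambda_2$; this uses only the plain comparison principle for $L$. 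Your argument instead needs the maximum principle for the linearized operator $L-\lambda f'(u_\lambda)$, which you correctly justify by strict stability ($\kappa_1>0$ on $(0,\lambda^*)$), plus the differentiability from Proposition \ref{p1}(ii); your identity $(L-\lambda f'(u_\lambda))(A-B)=\big[f''(u_\lambda)F(u_\lambda)+f'(u_\lambda)/f(u_\lambda)\big]\sum a_{ij}\partial_i u_\lambda\,\partial_j u_\lambda$ checks out, and the drift indeed just rides along inside $G'(u_\lambda)Lu_\lambda$. Your version is heavier in tools but yields the sharper pointwise differential inequality $\lambda\,\partial_\lambda u_\lambda\geqslant f(u_\lambda)F(u_\lambda)$; the paper's is more elementary and needs no stability. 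For part (ii) the paper again reuses its supersolution machinery: combining Theorem \ref{t4} with the bound $u_{\lambda(\alpha)}\leqslant F^{-1}(\alpha\psi_L)$ from Theorem \ref{t1} gives $\psi_L\leqslant F(u_{\lambda(\alpha)})/\lambda(\alpha)\leqslant\psi_L/(1-\alpha\beta(\alpha))$ and lets $\alpha\to 0^+$, with no elliptic estimates at all; your route through $W^{2,p}$/Schauder bounds giving $\|u_\lambda\|_{C^1}=O(\lambda)$, hence $\|Q_\lambda/\lambda\|_\infty=O(\lambda)$ and a comparison with $O(\lambda)\psi_L$, is more work but has the advantage of treating every small $\lambda$ directly rather than only those of the form $\lambda(\alpha)=\alpha-\alpha^2\beta(\alpha)$.
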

Note that the first assertion of Proposition $ \ref{p2} $ gives an upper bound for the minimal solution of problem $ (\ref{eq1}) $ which is an interesting issue in itself. For example, consider the following problem
\begin{equation*}
\left\{\begin{array}{ll} - \Delta u = \lambda e^{u}  {\rm }\ &x \in \Omega, \\
\hskip5mm u = 0 {\rm }\ &x \in \partial \Omega.
\end{array}\right.
\end{equation*}
Here we have $ f(t)=e^{t} $, $ F(t)=1-e^{-t} $, $ \|F\|_{\infty}=1 $ and $ F^{-1}(t)=\ln \dfrac{1}{1-t} $. Taking $ \lambda^{*}=\lambda^{*}(e^{t},\Omega) $, then, by part (i) of Proposition $ \ref{p2} $ we have
\begin{eqnarray*}
u_{\lambda}(x) \leqslant \ln \dfrac{\lambda^{*}}{\lambda^{*} - \lambda} ~~~ \textrm{for} ~ \textrm{all} ~ \lambda \in (0,\lambda^{*}).
\end{eqnarray*}
If $ N > 9 $ and $ \Omega = B(0,1) $, then $ \lambda^{*}(e^{t},B(0,1)) = 2N-4 $ \cite{2}, so we have
\begin{eqnarray*}
u_{\lambda}(x) \leqslant \ln \dfrac{2N-4}{2N-4 - \lambda} ~~~ \textrm{for} ~ \textrm{all} ~ \lambda \in (0,2N-4).
\end{eqnarray*}
%
\section{Existence and basic properties of the extremal parameter}
%
In this section, we prove Proposition $\ref{p1}$ which is well known when $L=-\Delta$, and also prove the first assertion of Proposition $\ref{p2}$. To do these, first we give a nonexistence result for the nonlinear eigenvalue problem $(\ref{eq1})$.
\begin{lemma}\label{l4}
The problem $(\ref{eq1})$ admits no classical solutions for $\lambda > \mu_{1}(L^{*},\Omega) \displaystyle{\sup_{0 < t < a_{f}}} \dfrac{t}{f(t)}$.
\end{lemma}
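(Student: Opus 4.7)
The plan is the standard Keller--Pohozaev style eigenfunction test, adapted to a non-self-adjoint operator by integrating against the principal eigenfunction of the \emph{adjoint} problem rather than of $L$ itself.

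Let $(\eta,\mu_{1}(L^{*},\Omega))$ denote the first eigenpair of $(\ref{eq25})$; by the Krein--Rutman theorem $\eta>0$ in $\Omega$ with $\eta=0$ on $\partial\Omega$, and the eigenvalue $\mu_{1}(L^{*},\Omega)$ is positive. Suppose, for contradiction, that $u\in C^{2}(\overline{\Omega})$ is a classical solution of $(\ref{eq1})$ for some $\lambda>\mu_{1}(L^{*},\Omega)\sup_{0<t<a_{f}}t/f(t)$. The minimal-solution property (or the strong maximum principle applied to $Lu=\lambda f(u)\geqslant\lambda f(0)>0$ with $u=0$ on $\partial\Omega$) yields $u>0$ in $\Omega$.

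Next I would multiply the equation $Lu=\lambda f(u)$ by $\eta$, integrate over $\Omega$, and use the divergence form of $L$ together with the vanishing of both $u$ and $\eta$ on $\partial\Omega$ to move the derivatives onto $\eta$. All boundary terms drop out, so one obtains
\begin{equation*}
\int_{\Omega} \eta\,Lu\,dx \;=\; \int_{\Omega} u\,L^{*}\eta\,dx \;=\; \mu_{1}(L^{*},\Omega)\int_{\Omega} u\,\eta\,dx.
\end{equation*}
Equating the two expressions gives the identity
\begin{equation*}
\mu_{1}(L^{*},\Omega)\int_{\Omega} u\,\eta\,dx \;=\; \lambda\int_{\Omega} f(u)\,\eta\,dx.
\end{equation*}

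To finish, set $M:=\sup_{0<t<a_{f}} t/f(t)$, so that $t\leqslant M\,f(t)$ for every $t\in(0,a_{f})$. Applying this pointwise with $t=u(x)$ and multiplying by the positive weight $\eta(x)$, then integrating, yields $\int_{\Omega} u\,\eta\,dx \leqslant M\int_{\Omega} f(u)\,\eta\,dx$. Since $f(0)>0$ and $\eta>0$ in $\Omega$, the quantity $\int_{\Omega} f(u)\,\eta\,dx$ is strictly positive, so after substitution and cancellation one obtains $\lambda\leqslant \mu_{1}(L^{*},\Omega)\,M$, contradicting the hypothesis.

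The only real friction I expect is bookkeeping in the integration by parts: with $L$ written in divergence form $Lu=-\partial_{j}(a_{ij}\partial_{i}u)+b\cdot\nabla u$, one integrates by parts twice in the principal part and once on the drift term, checking that each boundary contribution involves either $u$ or $\eta$ and hence vanishes, so that the right-hand side is precisely $\int u\,L^{*}\eta\,dx$ for $L^{*}$ as defined in the introduction. Everything else is a one-line comparison.
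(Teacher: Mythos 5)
Your argument is correct and follows essentially the same route as the paper: both test the equation $Lu=\lambda f(u)$ against the positive principal eigenfunction $\eta$ of the adjoint problem $(\ref{eq25})$, integrate by parts to obtain $\mu_{1}(L^{*},\Omega)\int_{\Omega}u\,\eta\,dx=\lambda\int_{\Omega}f(u)\,\eta\,dx$, and then compare using $t\leqslant f(t)\sup_{0<s<a_{f}}s/f(s)$. Your final step (integrating the pointwise inequality against the weight $\eta$ and cancelling the strictly positive factor $\int_{\Omega}f(u)\,\eta\,dx$) is in fact slightly cleaner than the paper's appeal to the existence of a point where the integrand is nonpositive, but the content is identical.
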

\begin{proof}
Clearly,
\begin{eqnarray*}
\int_{\Omega} \Big( L^{*} \eta - \mu_{1}(L^{*},\Omega) \eta \Big) u ~ \textrm{d} x = 0,
\end{eqnarray*}
for any solution $ u $ of $ (\ref{eq1}) $. Now, integration by parts implies that
\begin{eqnarray*}
\int_{\Omega} \eta \Big( \lambda f(u) - u \mu_{1}(L^{*},\Omega)) dx =0,
\end{eqnarray*}
and thus there exists $x \in \Omega$ such that $\lambda f(u(x)) - u(x) \mu_{1}(L^{*},\Omega) < 0$. It follows that
\begin{eqnarray*}
\lambda \leqslant \mu_{1}(L^{*},\Omega) \sup_{0 < t < a_{f}} \dfrac{t}{f(t)}.
\end{eqnarray*}
This completes the proof.
\end{proof}
Now, we show that there exists a constant $ C > 0 $ such that for all $ \lambda \in ( 0 , C ) $ the problem $ ( \ref{eq1} ) $ has a positive classical solution.
\begin{lemma}\label{l2}
Problem $ ( \ref{eq1} ) $ admits a minimal nonnegative solution $u_{\lambda}(x)$ for all $\lambda \leqslant \dfrac{1}{\psi_{L,\Omega}} \displaystyle{\sup_{0 < t < a_{f}}}\dfrac{t}{f(t)}$.
\end{lemma}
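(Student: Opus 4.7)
The plan is to produce a minimal solution by the classical monotone iteration scheme, using the torsion function $\psi_L$ to build an explicit supersolution. Set $M := \sup_{0 < t < a_f} t/f(t)$. Because $f(0) > 0$ and $f$ blows up at $a_f$, the continuous function $t \mapsto t/f(t)$ vanishes at both endpoints of $(0,a_f)$, so $M$ is attained at some interior point $t_0 \in (0,a_f)$. Take as candidate supersolution
\[
\bar u(x) := \frac{t_0}{\psi_{L,\Omega}}\, \psi_L(x),
\]
which lies in $C^2(\overline{\Omega})$, vanishes on $\partial\Omega$, satisfies $0 \leq \bar u(x) \leq t_0 < a_f$ pointwise, and has $L\bar u = t_0/\psi_{L,\Omega}$. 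For any $\lambda \leq M/\psi_{L,\Omega}$, monotonicity of $f$ together with $M = t_0/f(t_0)$ yields $\lambda f(\bar u) \leq \lambda f(t_0) \leq t_0/\psi_{L,\Omega} = L\bar u$, so $\bar u$ is a classical supersolution. The constant $\underline u \equiv 0$ is a subsolution, since $L\underline u = 0 < \lambda f(0)$, and $\underline u \leq \bar u$ because $\psi_L \geq 0$.

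Next I would iterate: set $u_0 := 0$ and define $u_{n+1}$ as the unique solution of $Lu_{n+1} = \lambda f(u_n)$ in $\Omega$ with zero Dirichlet data (existence by the Fredholm alternative already cited in the excerpt, regularity by the smoothness of the coefficients and of $f$). Two inductive applications of the maximum principle for the uniformly elliptic $L$ give: (i) $u_{n+1} \geq u_n$, since $L(u_{n+1}-u_n) = \lambda(f(u_n) - f(u_{n-1})) \geq 0$; and (ii) $u_n \leq \bar u$ for every $n$, since $L(\bar u - u_{n+1}) \geq \lambda f(\bar u) - \lambda f(u_n) \geq 0$. Hence $u_n$ converges monotonically to some $u_\lambda$ with $0 \leq u_\lambda \leq \bar u < a_f$. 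Because $f(u_n)$ is uniformly bounded by $f(t_0)$, Schauder estimates for the uniformly elliptic $L$ yield uniform $C^{2,\alpha}(\overline{\Omega})$ bounds, so $u_n \to u_\lambda$ in $C^2(\overline{\Omega})$ and $u_\lambda$ is a classical solution of $(\ref{eq1})$.

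For minimality, if $v$ is any other nonnegative classical solution, an induction identical to (ii) (now comparing $v$ with $u_{n+1}$, starting from $u_0 = 0 \leq v$) yields $u_n \leq v$ for every $n$, and therefore $u_\lambda \leq v$. The main technical point is the passage to the limit as a classical $C^2$ solution, which requires invoking elliptic regularity for $L$ with its drift term $c(x)\cdot\nabla$; however, the uniform bound $u_n \leq t_0 < a_f$ keeps the nonlinearity $f(u_n)$ bounded and bounded away from its singular edge, so Schauder estimates apply straightforwardly and the argument reduces to the standard monotone-scheme proof.
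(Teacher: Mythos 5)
Your proof is correct and follows essentially the same route as the paper: both take the supersolution $\bar u = \alpha\psi_L$ with $\alpha = t_0/\psi_{L,\Omega}$ (the paper chooses $\alpha$ so that $\alpha\psi_{L,\Omega}$ is the maximizer of $t/f(t)$, which is exactly your $t_0$), verify $L\bar u = \alpha \geq \lambda f(\bar u)$ for the stated range of $\lambda$, and then run the monotone iteration from $u_0\equiv 0$. The only cosmetic difference is that you inline the iteration and the justification that the supremum of $t/f(t)$ is attained, whereas the paper delegates the iteration to its Lemma~\ref{l3}.
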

To prove Lemma $\ref{l2}$, we construct a super-solution and using it we show that a positive solution of $(\ref{eq1})$ exists. To do that, we need the following well-known fact.
\begin{lemma}\label{l3}
Suppose that there exists a smooth function $\overline{u}(x)$ satisfying
\begin{equation}\label{eq3}
\left\{\begin{array}{ll} L \overline{u} \geqslant \lambda f(\overline{u})  {\rm }\ &x \in \Omega, \\
\hskip2.5mm \overline{u} \geqslant 0 {\rm }\ &x \in \partial \Omega.
\end{array}\right.
\end{equation}
Then there exists a classical solution $u_{\lambda}$ of $(\ref{eq1})$ which is minimal.
\end{lemma}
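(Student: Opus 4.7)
The plan is to use the classical monotone iteration (sub-/super-solution) scheme. I would start by verifying that $u_0 \equiv 0$ serves as a sub-solution: since $f(0)>0$, one has $L u_0 = 0 < \lambda f(0) = \lambda f(u_0)$ in $\Omega$ and $u_0 = 0$ on $\partial\Omega \leq \overline{u}|_{\partial \Omega}$. Then I would define inductively $u_{n+1}$ as the unique classical solution of the linear problem
\begin{equation*}
\left\{\begin{array}{ll} L u_{n+1} = \lambda f(u_{n}) & x \in \Omega, \\ \hskip8.5mm u_{n+1} = 0 & x \in \partial\Omega, \end{array}\right.
\end{equation*}
which is well defined by the Fredholm alternative and standard linear elliptic theory, and belongs to $C^{2}(\overline{\Omega})$ provided $u_n$ is smooth enough and stays in $[0,a_f)$.

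The core step is to prove by induction the chain of inequalities $0 = u_0 \leq u_1 \leq \dots \leq u_n \leq u_{n+1} \leq \overline{u}$. Monotonicity of the sequence follows from the maximum principle applied to $L(u_{n+1}-u_n) = \lambda(f(u_n)-f(u_{n-1})) \geq 0$, using that $f$ is increasing. The bound $u_{n+1} \leq \overline{u}$ uses the super-solution property: $L(\overline{u}-u_{n+1}) \geq \lambda(f(\overline{u}) - f(u_n)) \geq 0$ together with $\overline{u}-u_{n+1} \geq 0$ on $\partial\Omega$, again via the maximum principle for $L$. In particular the sequence never reaches $a_f$, so $f(u_n)$ stays well-defined.

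Having a monotone bounded sequence, I would pass to the pointwise limit $u_\lambda(x) := \lim_n u_n(x) \leq \overline{u}(x)$. Standard interior and boundary $L^p$/Schauder estimates applied to the linear equations $L u_{n+1} = \lambda f(u_n)$, together with the uniform $L^\infty$ bound coming from $\overline{u}$ and the continuity of $f$ on $[0,\|\overline{u}\|_\infty]$, give uniform $C^{2,\alpha}(\overline{\Omega})$ bounds on $u_n$. Compactness (Arzelà–Ascoli in $C^2$) then lets me pass to the limit in the PDE to conclude that $u_\lambda \in C^2(\overline{\Omega})$ solves $(\ref{eq1})$. The strong maximum principle, combined with $f(0)>0$, upgrades $u_\lambda$ to strictly positive in $\Omega$.

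Finally, minimality follows by applying the same iteration with any other nonnegative classical solution $v$ in place of $\overline{u}$: since $v$ trivially satisfies $(\ref{eq3})$, the same inductive argument gives $u_n \leq v$ for every $n$, hence $u_\lambda \leq v$. The main subtlety is making sure the maximum principle is available for the drift operator $L$ on the bounded domain $\Omega$ (which holds because $L$ has no zero-order term and we are comparing functions with nonnegative boundary data), and ensuring that the iterates never leave the domain of $f$; both issues are handled uniformly by the super-solution $\overline{u}$.
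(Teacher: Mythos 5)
Your proposal is correct and follows essentially the same monotone iteration argument as the paper: the same sequence $u_{n+1}$ solving $Lu_{n+1}=\lambda f(u_n)$, the same inductive comparison with $\overline{u}$ via the maximum principle, and the same minimality argument (any classical solution is itself a super-solution). You simply supply more detail than the paper on the passage to the limit (Schauder estimates and Arzel\`a--Ascoli, where the paper only asserts uniform convergence to a classical solution), which is a welcome addition rather than a deviation.
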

\begin{proof}
Let $u_{0} \equiv 0$ and define an approximating sequence $u_{n}(x)$ such that $u_{n+1}(x)$ is the smooth solution of
\begin{equation*}
\left\{\begin{array}{ll} L u_{n+1} = \lambda f(u_{n})  {\rm }\ &x \in \Omega, \\
\hskip2.5mm u_{n+1} = 0 {\rm }\ &x \in \partial \Omega.
\end{array}\right.
\end{equation*}
From the maximum principle we know that  $0\leq u_{0} \leq\overline{u} $. Now by induction, assuming $0 \leqslant u_{n-1} \leqslant \overline{u}$ for some $n \in \Bbb{N}$, we get
\begin{equation*}
\left\{\begin{array}{ll} L (\overline{u}-u_{n}) \geqslant \lambda [f(\overline{u})-f(u_{n-1})] \geqslant 0  {\rm }\ &x \in \Omega, \\
\hskip5mm \overline{u}-u_{n} \geqslant 0 {\rm }\ &x \in \partial \Omega,
\end{array}\right.
\end{equation*}
 concludes that $0 \leqslant u_{n} \leqslant \overline{u}$. In a similar way, the maximum principle implies that the sequence $\{ u_{n} \}$ is monotone increasing. Therefore, the sequence $\{ u_{n} \}$ converges uniformly to a limit $u_{\lambda}$ which has to be a classical solution of $(\ref{eq1})$ and satisfies $0 \leqslant u_{\lambda} \leqslant \overline{u}$. Since this inequality holds for any solution of $(\ref{eq3})$, then $u_{\lambda}$ is a minimal positive solution of $(\ref{eq1})$ and is clearly unique.
\end{proof}
\begin{proof}[Proof of Lemma \ref{l2}]
Choose $\alpha > 0$ such that
\begin{eqnarray*}
\dfrac{\alpha \psi_{L,\Omega}}{f(\alpha \psi_{L,\Omega})} = \sup_{0 < t < a_{f}} \dfrac{t}{f(t)},
\end{eqnarray*}
and consider the smooth function $\overline{u}(x)=\alpha \psi_{L}(x)$ for $x \in \Omega$. Clearly, we have
\begin{equation*}
\left\{\begin{array}{ll} L \overline{u} = \alpha \geqslant \lambda f(\overline{u})  {\rm }\ &x \in \Omega, \\
\hskip2.5mm \overline{u} = 0 {\rm }\ &x \in \partial \Omega,
\end{array}\right.
\end{equation*}
provided that $\lambda \leqslant \dfrac{\alpha}{f(\alpha \psi_{L,\Omega})} = \dfrac{1}{\psi_{L,\Omega}} \dfrac{\alpha \psi_{L,\Omega}}{f(\alpha \psi_{L,\Omega})} = \dfrac{1}{\psi_{L,\Omega}} \displaystyle{\sup_{0 < t < a_{f}}} \dfrac{t}{f(t)}$. Now, existence of a minimal solution to $(\ref{eq1})$ follows from Lemma $\ref{l3}$.
\end{proof}
The following two lemmas show that any minimal solution of $(\ref{eq1})$ is stable. We recall that for any minimal solution $u_{\lambda}$ of $(\ref{eq1})$ we denote by $\kappa_{1}(\lambda,u_{\lambda})$ the principal eigenvalue corresponding to positive eigenfunction $\phi$ of the following linearized operator $\tilde{L}_{\lambda}$
\begin{eqnarray}\label{eq7}
\tilde{L}_{\lambda} \varphi = L \varphi - \lambda f'(u_{\lambda}) \varphi ~~~ \textrm{for} ~ \textrm{all} ~ \varphi \in C^{2}(\Omega)
\end{eqnarray}
\begin{lemma}
For any minimal solution of $ ( \ref{eq1} ) $ we have $ \kappa_{1} ( \lambda , u_{\lambda} ) \geqslant 0 $.
\end{lemma}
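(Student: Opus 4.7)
The plan is to argue by contradiction: assume $\kappa_1(\lambda, u_\lambda) < 0$ for some minimal solution $u_\lambda$ and exhibit a strictly smaller nonnegative supersolution, which by Lemma \ref{l3} would produce a classical solution strictly below $u_\lambda$, contradicting minimality.

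First I would fix the positive principal eigenfunction $\phi > 0$ of $\tilde L_\lambda$ (normalized, say, $\|\phi\|_\infty = 1$), so that $L\phi = \lambda f'(u_\lambda)\phi + \kappa_1 \phi$ with $\phi = 0$ on $\partial\Omega$. By Hopf's lemma applied to both $u_\lambda$ and $\phi$, the function $w_\varepsilon := u_\lambda - \varepsilon \phi$ is nonnegative in $\overline\Omega$ for all sufficiently small $\varepsilon > 0$, and stays strictly inside the domain $[0, a_f)$ of $f$.

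Next I would compute the defect $Lw_\varepsilon - \lambda f(w_\varepsilon)$. Using $Lu_\lambda = \lambda f(u_\lambda)$ together with the formula for $L\phi$ above,
\begin{equation*}
Lw_\varepsilon - \lambda f(w_\varepsilon) = \lambda\bigl[f(u_\lambda) - f(w_\varepsilon)\bigr] - \varepsilon \lambda f'(u_\lambda)\phi - \varepsilon \kappa_1 \phi .
\end{equation*}
Convexity of $f$ gives $f(u_\lambda) - f(w_\varepsilon) \geq f'(w_\varepsilon)(u_\lambda - w_\varepsilon) = \varepsilon f'(w_\varepsilon)\phi$, so
\begin{equation*}
Lw_\varepsilon - \lambda f(w_\varepsilon) \geq \varepsilon\lambda\bigl[f'(w_\varepsilon) - f'(u_\lambda)\bigr]\phi - \varepsilon \kappa_1 \phi .
\end{equation*}
Since $f'$ is continuous and $w_\varepsilon \to u_\lambda$ uniformly as $\varepsilon \to 0$, the bracketed difference is $o(1)$ while $-\kappa_1 > 0$ is a fixed positive constant. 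Hence for $\varepsilon$ small enough the right-hand side is strictly positive in $\Omega$, so $w_\varepsilon$ is a strict classical supersolution of $(\ref{eq1})$ with $w_\varepsilon = 0$ on $\partial\Omega$.

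Applying Lemma \ref{l3} to this supersolution produces a nonnegative classical solution $\tilde u$ of $(\ref{eq1})$ with $\tilde u \leq w_\varepsilon = u_\lambda - \varepsilon \phi < u_\lambda$ in $\Omega$, contradicting the minimality of $u_\lambda$. Therefore $\kappa_1(\lambda, u_\lambda) \geq 0$. The only real subtlety is justifying that the convexity estimate plus smallness of $\varepsilon$ beats $-\kappa_1 \phi$ uniformly on $\Omega$; this is where I would need $\phi > 0$ strictly in $\Omega$ and bounded control of $f'$ on the range of $u_\lambda$, both of which follow from the regularity $u_\lambda \in C^2(\overline\Omega)$ with $u_\lambda < a_f$ and the elliptic maximum principle for $\tilde L_\lambda$.
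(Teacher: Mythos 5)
Your proof is correct and follows essentially the same route as the paper: both assume $\kappa_{1}<0$, form $u_{\lambda}-\epsilon\phi$ with $\phi$ the principal eigenfunction, show it is a supersolution for small $\epsilon$, and contradict minimality via Lemma \ref{l3}. The only (immaterial) difference is that you control the nonlinear remainder with the convexity inequality $f(u_{\lambda})-f(w_{\epsilon})\geqslant f'(w_{\epsilon})(u_{\lambda}-w_{\epsilon})$ plus uniform continuity of $f'$, whereas the paper uses a second-order Taylor expansion with Lagrange remainder; your added care about $w_{\epsilon}\geqslant 0$ via Hopf's lemma is a detail the paper leaves implicit.
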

\begin{proof}
Assume that $u_{\lambda}$ is a minimal solution of $(\ref{eq1})$ and the principal eigenvalue $\kappa_{1}(\lambda,u_{\lambda})$ of the problem
\begin{equation*}
\left\{\begin{array}{ll} L \phi - \lambda f'(u_{\lambda}) \phi = \kappa_{1}(\lambda,u_{\lambda}) \phi  {\rm }\ &x \in \Omega, \\
\hskip20.5mm \phi = 0 {\rm }\ &x \in \partial \Omega,
\end{array}\right.
\end{equation*}
is negative. Consider the function $\phi_{\epsilon}=u_{\lambda} - \epsilon \phi$, then we have
\begin{align*}
L \phi_{\epsilon} - \lambda f(\phi_{\epsilon}) &= \lambda f(u_{\lambda}) - \epsilon \lambda f'(u_{\lambda}) \phi - \epsilon \kappa_{1}(\lambda, u_{\lambda}) \phi - \lambda f(u_{\lambda} - \epsilon \phi) \\
&= - \epsilon \kappa_{1}(\lambda,u_{\lambda}) \phi + \lambda \big( f(u_{\lambda}) - \epsilon f'(u_{\lambda}) \phi - f(u_{\lambda} - \epsilon \phi ) \big) \\
&= - \epsilon \kappa_{1}(\lambda,u_{\lambda}) - \dfrac{\epsilon^{2} f''(\xi)}{2} \phi^{2} \geqslant 0,
\end{align*}
provided that $\epsilon$ is sufficiently small. This means that problem $(\ref{eq1})$ has a classical solution, say $u$, which satisfies $u \leqslant \phi_{\epsilon} < u_{\lambda}$ by Lemma $ (\ref{l3}) $. This contradicts the minimality of $u_{\lambda}$. So, we have $\kappa_{1}(\lambda,u_{\lambda}) \geqslant 0$ if $u_{\lambda}$ is a minimal solution.
\end{proof}
\begin{lemma}
Let $u_{\lambda}$ be a solution of $(\ref{eq1})$ such that $\kappa_{1}(\lambda,u_{\lambda}) = 0$. Then no classical solution of $(\ref{eq1})$ with $\overline{\lambda} > \lambda$ exists.
\end{lemma}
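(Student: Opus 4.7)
The plan is a proof by contradiction that exploits the formal adjoint of the linearised operator $\tilde L_{\lambda}=L-\lambda f'(u_{\lambda})$. Suppose, for contradiction, that there exists a classical solution $v$ of $(\ref{eq1})$ at some $\overline{\lambda}>\lambda$. By the Krein--Rutman theorem, the adjoint $\tilde L_{\lambda}^{*}=L^{*}-\lambda f'(u_{\lambda})$ shares the principal eigenvalue $\kappa_{1}(\lambda,u_{\lambda})=0$ and admits a strictly positive principal eigenfunction $\phi^{*}>0$ in $\Omega$, $\phi^{*}=0$ on $\partial\Omega$, with $L^{*}\phi^{*}=\lambda f'(u_{\lambda})\phi^{*}$.

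The key step is to test both equations $Lu_{\lambda}=\lambda f(u_{\lambda})$ and $Lv=\overline{\lambda}f(v)$ against $\phi^{*}$ and transfer $L$ onto $\phi^{*}$ via integration by parts. From the equation for $u_{\lambda}$ one obtains the identity
\begin{equation*}
\int_{\Omega} f(u_{\lambda})\phi^{*}\,dx=\int_{\Omega} u_{\lambda}f'(u_{\lambda})\phi^{*}\,dx,
\end{equation*}
and from the equation for $v$ the relation
\begin{equation*}
\lambda\int_{\Omega} v\,f'(u_{\lambda})\phi^{*}\,dx=\overline{\lambda}\int_{\Omega} f(v)\phi^{*}\,dx.
\end{equation*}
Invoking convexity in the form $f(v)\geqslant f(u_{\lambda})+f'(u_{\lambda})(v-u_{\lambda})$, integrating against $\phi^{*}\geqslant 0$, and using the first identity to cancel the $f(u_{\lambda})$ and $u_{\lambda}f'(u_{\lambda})$ terms, we deduce $\int_{\Omega} f(v)\phi^{*}\,dx\geqslant\int_{\Omega} v\,f'(u_{\lambda})\phi^{*}\,dx$. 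Substituting this bound into the second identity yields
\begin{equation*}
(\overline{\lambda}-\lambda)\int_{\Omega} v\,f'(u_{\lambda})\phi^{*}\,dx\leqslant 0.
\end{equation*}

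Since $\overline{\lambda}>\lambda$ and the integrand is nonnegative, the integral must vanish. The strong maximum principle applied to $Lv=\overline{\lambda}f(v)\geqslant\overline{\lambda}f(0)>0$ with $v=0$ on $\partial\Omega$ forces $v>0$ in $\Omega$; combined with $\phi^{*}>0$ in $\Omega$ and the fact that $f'(u_{\lambda})$ is not identically zero in $\Omega$ (since $f$ is strictly increasing and $u_{\lambda}$ takes a continuum of positive values by strong maximum principle), this contradicts the vanishing of the integral and completes the proof.

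The main technical ingredient is the existence of a positive principal eigenfunction $\phi^{*}$ for the adjoint $\tilde L_{\lambda}^{*}$ together with the equality of principal eigenvalues for $\tilde L_{\lambda}$ and $\tilde L_{\lambda}^{*}$; both are standard consequences of the Krein--Rutman theorem for uniformly elliptic linear operators with Dirichlet boundary condition. Once this is granted, the remainder of the argument is a short exercise combining integration by parts with convexity of $f$; the delicate part is recognising that the correct test function is $\phi^{*}$ rather than the direct eigenfunction $\phi$ of $\tilde L_{\lambda}$, so that the coefficient $L^{*}\phi^{*}=\lambda f'(u_{\lambda})\phi^{*}$ exactly matches the linearisation at $u_{\lambda}$ needed to exploit convexity.
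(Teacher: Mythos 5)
Your argument is correct and rests on the same two ingredients as the paper's proof: the positive principal eigenfunction $\phi^{*}$ of the adjoint linearised operator (so that $L^{*}\phi^{*}=\lambda f'(u_{\lambda})\phi^{*}$ when $\kappa_{1}=0$) paired against the convexity of $f$ via Green's identity. The only difference is organisational --- the paper first derives the pointwise inequality $L\xi-\lambda f'(u_{\lambda})\xi\geqslant(\overline{\lambda}-\lambda)f(\overline{u})>0$ for $\xi=\overline{u}-u_{\lambda}$ by differentiating the family $\eta_{\tau}=u_{\lambda}+\tau(\overline{u}-u_{\lambda})$ at $\tau=0$ and then integrates against $\phi^{*}$, whereas you integrate first and apply the convexity inequality $f(v)\geqslant f(u_{\lambda})+f'(u_{\lambda})(v-u_{\lambda})$ under the integral sign, reaching the same contradiction (your final step is slightly cleaner if you note $(\overline{\lambda}-\lambda)\int_{\Omega}f(v)\phi^{*}\,dx\leqslant 0$ directly, since $f(v)\geqslant f(0)>0$ makes that integral strictly positive without any discussion of where $f'(u_{\lambda})$ vanishes).
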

\begin{proof}
We argue by contradiction. Suppose that $\overline{\lambda} > \lambda$ and there exists a function $\overline{u} \geqslant 0$ such that
\begin{equation*}
\left\{\begin{array}{ll} L \overline{u} =\overline{\lambda} f(\overline{u})  {\rm }\ &x \in \Omega, \\
\hskip2.5mm \overline{u} = 0 {\rm }\ &x \in \partial \Omega.
\end{array}\right.
\end{equation*}
Also, denote by $\phi$ the positive eigenfunction of the adjoint problem
\begin{equation}\label{eq4}
\left\{\begin{array}{ll} L^{*} \phi = \lambda f'(u_{\lambda}) \phi  {\rm }\ &x \in \Omega, \\
\hskip4mm \phi = 0 {\rm }\ &x \in \partial \Omega.
\end{array}\right.
\end{equation}
Set $\eta_{\tau} = u_{\lambda} + \tau (\overline{u} - u_{\lambda})$ for all $\tau \in [0,1]$. Then convexity of $f$ implies that
\begin{align}\label{eq5}
L \eta_{\tau} - \lambda f(\eta_{\tau}) &= L \eta_{\tau} - \lambda f(\tau \overline{u} + (1-\tau)u_{\lambda} ) \\
&\geqslant L \eta_{\tau} - \lambda \tau f(\overline{u}) - \lambda (1-\tau) f(u_{\lambda}) \nonumber \\
&= \tau f(\overline{u}) (\overline{\lambda} - \lambda ) \geqslant 0, \nonumber
\end{align}
for all $\tau \in [0,1]$. Moreover, $L \eta_{0} = \lambda f(\eta_{0})$. If we differentiate $(\ref{eq5})$ with respect to $ \tau $ at $ \tau = 0 $, then we have the following inequality for $ \xi = \overline{u} - u_{\lambda} $:
\begin{eqnarray}\label{eq6}
L \xi - \lambda f'(u_{\lambda}) \xi \geqslant (\overline{\lambda} - \lambda ) f(\overline{u}) > 0.
\end{eqnarray}
Multiplying $(\ref{eq6})$ by the eigenfunction $\phi$ of $(\ref{eq4})$ and integrating by part, one obtains
\begin{eqnarray*}
0 < \int_{\Omega} \phi \big( L \xi - \lambda f'(u_{\lambda}) \xi \big) dx = \int_{\Omega} \xi \big( L^{*} \phi - \lambda f'(u_{\lambda}) \phi \big) dx = 0,
\end{eqnarray*}
which is a contradiction. Therefore, there exists no classical solution of $(\ref{eq1})$ for $\overline{\lambda} > \lambda$ if $\kappa_{1}(\lambda , u_{\lambda}) = 0$.
\end{proof}
Notice that the above lemma also proves that the extremal parameter of problem $(\ref{eq1})$ can be determined by
\begin{eqnarray*}
\lambda^{*}(L,\Omega,f) = \sup \Big\{ \lambda > 0 : \textrm{the} ~ \textrm{minimal} ~ \textrm{solution} ~ u_{\lambda} ~ \textrm{of} ~ \textrm{problem} ~ (\ref{eq1}) ~ \textrm{is} ~ \textrm{stable} \Big\}.
\end{eqnarray*}
The following lemma completes the proof of Proposition $\ref{p1}$.
\begin{lemma}
Let $u_{\lambda}$ be the minimal solution of $(\ref{eq1})$ for $\lambda \in (0,\lambda^{*})$, then for each $x \in \Omega$ the function $\lambda \longmapsto u_{\lambda}(x)$ is strictly increasing and differentiable on $(0,\lambda^{*})$.
\end{lemma}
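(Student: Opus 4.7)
The plan is to get monotonicity directly from the minimal/supersolution comparison already used in the proof of Lemma \ref{l3}, and to get differentiability from the implicit function theorem applied to the map $(\lambda,u)\mapsto Lu-\lambda f(u)$ in H\"older spaces, using the invertibility of the linearised operator $\tilde L_\lambda$ that the preceding two lemmas just established.

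For monotonicity, fix $0<\lambda_1<\lambda_2<\lambda^*$. Since $f\geqslant f(0)>0$, the minimal solution $u_{\lambda_2}$ satisfies
\begin{equation*}
L u_{\lambda_2}=\lambda_2 f(u_{\lambda_2})\geqslant \lambda_1 f(u_{\lambda_2}),\qquad u_{\lambda_2}|_{\partial\Omega}=0,
\end{equation*}
so $u_{\lambda_2}$ is a supersolution for $(\ref{eq1})$ at parameter $\lambda_1$. The iteration of Lemma \ref{l3} starting from $0$ then produces the minimal solution $u_{\lambda_1}$ with $u_{\lambda_1}\leqslant u_{\lambda_2}$. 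Strict inequality follows from the strong maximum principle: the difference $w=u_{\lambda_2}-u_{\lambda_1}\geqslant0$ satisfies $Lw=(\lambda_2-\lambda_1)f(u_{\lambda_2})+\lambda_1[f(u_{\lambda_2})-f(u_{\lambda_1})]\geqslant (\lambda_2-\lambda_1)f(0)>0$, so $w$ cannot attain the value $0$ at an interior point without being constant; since $w=0$ on $\partial\Omega$ we conclude $w>0$ in $\Omega$, giving the strict monotonicity $\lambda\mapsto u_\lambda(x)$ for every $x\in\Omega$.

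For differentiability, fix $\lambda_0\in(0,\lambda^*)$ and consider $G:\mathbb R\times C^{2,\alpha}_0(\overline\Omega)\to C^{\alpha}(\overline\Omega)$ defined by $G(\lambda,u)=Lu-\lambda f(u)$ (here $C^{2,\alpha}_0$ denotes functions vanishing on $\partial\Omega$). Then $G(\lambda_0,u_{\lambda_0})=0$ and the partial Fréchet derivative in $u$ is exactly $\tilde L_{\lambda_0}\varphi=L\varphi-\lambda_0 f'(u_{\lambda_0})\varphi$. The two stability lemmas show that $\kappa_1(\lambda_0,u_{\lambda_0})>0$: we already know $\kappa_1\geqslant0$, and $\kappa_1=0$ would contradict the existence of solutions for $\lambda\in(\lambda_0,\lambda^*)$. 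A principal eigenvalue strictly positive implies that $\tilde L_{\lambda_0}:C^{2,\alpha}_0\to C^{\alpha}$ is an isomorphism (by Fredholm theory together with the maximum principle in the form that $\tilde L_{\lambda_0}w\geqslant0$, $w|_{\partial\Omega}=0$ forces $w\geqslant0$, so the kernel is trivial). The implicit function theorem therefore produces a $C^1$ curve $\lambda\mapsto v(\lambda)$ of classical solutions near $\lambda_0$ with $v(\lambda_0)=u_{\lambda_0}$. By the local uniqueness in the IFT and the minimality characterisation, $v(\lambda)=u_\lambda$ for $\lambda$ close to $\lambda_0$, so $\lambda\mapsto u_\lambda(x)$ is differentiable, and $\partial_\lambda u_\lambda$ solves
\begin{equation*}
\tilde L_\lambda(\partial_\lambda u_\lambda)=f(u_\lambda)\geqslant f(0)>0,\qquad \partial_\lambda u_\lambda|_{\partial\Omega}=0,
\end{equation*}
which, together with $\kappa_1(\lambda,u_\lambda)>0$, gives $\partial_\lambda u_\lambda>0$ in $\Omega$ and recovers strict monotonicity.

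The main subtlety is justifying that the linearised operator $\tilde L_\lambda$ has a \emph{strictly} positive principal eigenvalue on the whole open interval $(0,\lambda^*)$, not merely $\kappa_1\geqslant0$; this is the only nontrivial input, and it is supplied by the preceding two lemmas (non-negativity of $\kappa_1$ for every minimal solution, and the non-existence conclusion when $\kappa_1=0$). Everything else is standard elliptic regularity plus a routine application of the implicit function theorem, so I would keep the exposition brief and concentrate the write-up on the IFT step and on the strong-maximum-principle argument that upgrades $u_{\lambda_1}\leqslant u_{\lambda_2}$ to a strict inequality.
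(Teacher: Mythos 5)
Your proposal is correct and follows essentially the same route as the paper: a supersolution/maximum-principle comparison gives the (strict) monotonicity, and the implicit function theorem applied to $(\lambda,\Phi)\mapsto L\Phi-\lambda f(\Phi)$, with invertibility of the linearised operator $\tilde{L}_{\lambda_0}$, gives differentiability. If anything you are more careful than the paper, which deduces strictness from the comparison $L\big(u_{\lambda_{1}}-\tfrac{\lambda_{1}}{\lambda_{2}}u_{\lambda_{2}}\big)\leqslant 0$ (hence $u_{\lambda_{1}}\leqslant\tfrac{\lambda_{1}}{\lambda_{2}}u_{\lambda_{2}}<u_{\lambda_{2}}$) rather than your strong-maximum-principle argument, and which invokes invertibility from ``stability'' without spelling out, as you do, that $\kappa_{1}>0$ (not merely $\kappa_{1}\geqslant 0$) is what the preceding two lemmas actually supply on $(0,\lambda^{*})$.
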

\begin{proof}
Suppose that $0 < \lambda_{1} < \lambda_{2} < \lambda^{*}$, then clearly we have $L u_{\lambda_{1}} = \lambda_{1} f(u_{\lambda_{1}}) \leqslant \lambda_{1} f(u_{\lambda_{2}}) = \dfrac{\lambda_{1}}{\lambda_{2}} \lambda_{2} f(u_{\lambda_{2}}) = \dfrac{\lambda_{1}}{\lambda_{2}} L u_{\lambda_{2}}$. This means that
\begin{equation*}
\left\{\begin{array}{ll} L ( u_{\lambda_{1}} - \dfrac{\lambda_{1}}{\lambda_{2}} u_{\lambda_{2}} ) \leqslant 0  {\rm }\ &x \in \Omega, \\
\hskip5mm u_{\lambda_{1}} - \dfrac{\lambda_{1}}{\lambda_{2}} u_{\lambda_{2}} = 0 {\rm }\ &x \in \partial \Omega.
\end{array}\right.
\end{equation*}
Now, maximum principle implies that $\dfrac{u_{\lambda_{1}}}{\lambda_{1}} \leqslant \dfrac{u_{\lambda_{2}}}{\lambda_{2}}$. It follows that $u_{\lambda_{1}} < u_{\lambda_{2}}$.

Fix $\lambda_{0} \in (0,\lambda^{*})$ and define the operator $P$ such that $P(\lambda,\Phi) = L \Phi -\lambda f(\Phi)$ for all $\lambda \in (0,\lambda^{*})$ and $\Phi \in C^{2}(\Omega) \cap C(\partial \Omega) $ such that $\Phi = 0$ on $\partial\Omega$.
Clearly, $P$ is a $C^{1}$ map and $P(\lambda_{0},u_{\lambda_{0}}) = 0$. On the other hand $d_{\Phi} P(\lambda_{0},u_{\lambda_{0}}) = \tilde{L}_{\lambda_{0}}$, where $\tilde{L}_{\lambda_{0}}$ is defined by $(\ref{eq7})$ and $d_{\Phi} P(\lambda_{0},u_{\lambda_{0}}) $ is derivative of the function $ p(\lambda , \Phi) $ with respect to $ \Phi $. Since $u_{\lambda_{0}}$ is stable, the linearized operator $\tilde{L}_{\lambda_{0}}$ is invertible. By the Implicit Function Theorem, $\lambda \longmapsto u_{\lambda}(x)$ is differentiable at $\lambda_{0}$ and by monotonicity, $\dfrac{d u_{\lambda}}{d \lambda}(x) \geqslant 0$ for all $x \in \Omega$. 
\end{proof}
In the following, we prove the first assertion of Proposition $\ref{p2}$.
\begin{proof}[Proof of Proposition $ \ref{p2} $. $(i)$]
Let $0 < \lambda_{1} < \lambda_{2} < \lambda^{*}$ be arbitrary and set $\alpha = \dfrac{\lambda_{1}}{\lambda_{2}}$. Consider the function $\overline{u}(x)=F^{-1} \Big( \alpha F \big(u_{\lambda_{2}}(x) \big) \Big)$ for all $x \in \Omega$. Note that since $\alpha < 1$ and the function $f'$ is increasing, then $f'(u_{\lambda_{2}}) - \alpha f'(\overline{u}) > 0$. Letting  $A:=[a_{i,j}(x)]_{i,j}$ which is a symmetric matrix and positive definite for all $x \in \Omega$, then it can be easily checked that
\begin{align*}
L \overline{u} &= - \sum_{i,j=1}^{N} a_{i,j}\dfrac{\partial^{2} \overline{u}}{\partial x_{i} \partial x_{j}} + c(x).\nabla \overline{u} \\
&=- \sum_{i,j=1}^{N} a_{i,j} \Big( \dfrac{\alpha^{2} f'(\overline{u})-\alpha f'(u_{\lambda_{2}})}{f^{2}(u_{\lambda_{2}})} \dfrac{\partial u_{\lambda_{2}}}{\partial x_{i}} \dfrac{\partial u_{\lambda_{2}}}{\partial x_{j}} + \dfrac{\alpha}{f(u_{\lambda_{2}})} \dfrac{\partial^{2} u_{\lambda_{2}}}{\partial x_{i} \partial x_{j}} \Big) f(\overline{u}) \\
&\quad + \dfrac{\alpha f(\overline{u})}{f(u_{\lambda_{2}})} c(x). \nabla u_{\lambda_{2}} \\
&=- \sum_{i,j=1}^{N} a_{i,j} \Big( \dfrac{\alpha^{2} f'(\overline{u})-\alpha f'(u_{\lambda_{2}})}{f^{2}(u_{\lambda_{2}})} \dfrac{\partial u_{\lambda_{2}}}{\partial x_{i}} \dfrac{\partial u_{\lambda_{2}}}{\partial x_{j}} \Big) f(\overline{u}) + \dfrac{\alpha f(\overline{u})}{f(u_{\lambda_{2}})} L u_{\lambda_{2}} \\
&= \Big( \alpha \nabla u_{\lambda_{2}} A (\nabla u_{\lambda_{2}})^{T} \dfrac{ f'(u_{\lambda_{2}}) - \alpha f'(\overline{u}) }{f^{2}(u_{\lambda_{2}})} + \lambda_{1} \Big) f(\overline{u}) \geqslant \lambda_{1} f(\overline{u}).
\end{align*}
It then follows that $\overline{u}(x)$ is a super-solution of
\begin{equation*}
\left\{\begin{array}{ll} L u = \lambda_{1} f(u)  {\rm }\ &x \in \Omega, \\
\hskip2.5mm u = 0 {\rm }\ &x \in \partial \Omega,
\end{array}\right.
\end{equation*}
Hence, by Lemma $\ref{l3}$, we have $u_{\lambda_{1}} \leqslant \overline{u}$, so $\dfrac{F(u_{\lambda_{1}})}{\lambda_{1}} \leqslant \dfrac{F(u_{\lambda_{2}})}{\lambda_{2}}$.
\end{proof}
Uniform $L^{\infty}$-bounds for the functions $u_{\lambda}$ at $\lambda=\lambda^{*}$ are difficult to obtain. In the following, we prove that when we are away from $\lambda^{*}$ a uniform $L^{\infty}$-bound exists which is not depend on the domain $ \Omega $ and the linear operator $L$.
\begin{theorem}
For any $ 0 < \delta < 1 $ we have
\begin{eqnarray*}
0 \leqslant u_{\lambda}(x) \leqslant C(\delta,f):=F^{-1}\Big( (1-\delta)\|F\|_{\infty} \Big)~ for ~all ~0 < \lambda \leqslant (1-\delta) \lambda^{*}.
\end{eqnarray*}
Note that $C(\delta,f)$  depends only on $\delta$ and nonlinearity $f(t)$ but not on the domain $\Omega$ or the linear operator $L$.
\end{theorem}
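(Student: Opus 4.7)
The plan is to deduce this theorem as an almost immediate corollary of Proposition \ref{p2}$(i)$, which has just been proved. Recall that Proposition \ref{p2}$(i)$ gives the pointwise bound
\begin{eqnarray*}
u_{\lambda}(x) \leqslant F^{-1}\!\Big( \tfrac{\lambda}{\lambda^{*}}\,\|F\|_{\infty} \Big) \qquad \text{for all } \lambda \in (0,\lambda^{*}).
\end{eqnarray*}

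First I would observe that since $f>0$ on $[0,a_{f})$, the function $F(t)=\int_{0}^{t}\mathrm{d} s/f(s)$ is strictly increasing, hence $F^{-1}$ is strictly increasing on $[0,\|F\|_{\infty})$. Therefore, if $0<\lambda\leqslant (1-\delta)\lambda^{*}$, then $\lambda/\lambda^{*}\leqslant 1-\delta$, which gives
\begin{eqnarray*}
u_{\lambda}(x) \;\leqslant\; F^{-1}\!\Big( \tfrac{\lambda}{\lambda^{*}}\,\|F\|_{\infty} \Big) \;\leqslant\; F^{-1}\!\Big( (1-\delta)\,\|F\|_{\infty} \Big) \;=\; C(\delta,f).
\end{eqnarray*}
Since $(1-\delta)\|F\|_{\infty}<\|F\|_{\infty}$, this value lies strictly below the endpoint $a_{f}$, so $F^{-1}$ is well-defined there and $C(\delta,f)$ is a finite constant. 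The lower bound $u_{\lambda}(x)\geqslant 0$ is just the fact that $u_{\lambda}$ is the minimal \emph{nonnegative} solution produced by Lemma \ref{l2}.

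Finally, I would emphasize the independence claim. The quantity $\|F\|_{\infty}=F(a_{f})=\int_{0}^{a_{f}}\mathrm{d} s/f(s)$ is defined purely in terms of the nonlinearity $f$, so $C(\delta,f)=F^{-1}\!\big((1-\delta)\|F\|_{\infty}\big)$ depends only on $\delta$ and $f$; neither the domain $\Omega$ nor the elliptic operator $L$ enters into it (the dependence on $\Omega$ and $L$ is entirely absorbed into the threshold $\lambda^{*}$ through the condition $\lambda\leqslant (1-\delta)\lambda^{*}$).

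There is essentially no obstacle here, since all the real work was done in proving Proposition \ref{p2}$(i)$; the only small points to verify carefully are the strict monotonicity of $F^{-1}$ on the relevant interval and the fact that $(1-\delta)\|F\|_{\infty}$ lies in the domain of $F^{-1}$, both of which follow from $f$ being positive and $\delta>0$.
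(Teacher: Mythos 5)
Your proof is correct and follows essentially the same route as the paper: both deduce the bound directly from Proposition \ref{p2}$(i)$, the only cosmetic difference being that the paper first uses monotonicity of $\lambda \mapsto u_{\lambda}(x)$ to pass to $u_{(1-\delta)\lambda^{*}}$, whereas you apply the bound at $\lambda$ itself and invoke the monotonicity of $F^{-1}$. Your additional remarks on the well-definedness of $C(\delta,f)$ and its independence of $\Omega$ and $L$ are accurate.
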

\begin{proof}
Fix $0 < \delta < 1$. Now, by Proposition $\ref{p2}$ $ (i) $, we have
\begin{eqnarray*}
0 \leqslant u_{\lambda} (x) \leqslant u_{(1-\delta) \lambda^{*}}(x) \leqslant F^{-1} \Big( \dfrac{(1-\delta)\lambda^{*}}{\lambda^{*}}\|F\|_{\infty} \Big) = F^{-1} \Big( (1-\delta) \|F\|_{\infty} \Big) = C(\delta ,f).
\end{eqnarray*}
as claimed.
\end{proof}
%
\section{Upper and lower bound for the extremal parameter}
%
In this section, we give another upper and lower bound for the extremal parameter of problem $(\ref{eq1})$ which are, in many cases, sharper  than those in $(\ref{eq8})$. In fact, we prove Theorems $\ref{t4}$, $\ref{t5}$ and $\ref{t1}$. We also give an estimate on $L^{\infty}$-bound for the extremal solution of problem $(\ref{eq1})$.
\begin{proof}[Proof of Theorem $ \ref{t4} $]
As before let $A = [a_{i,j}]_{i,j}$ which is positive definite symmetric matrix. By a simple computation we have
\begin{align*}
L F(u) =& F''(u) \Big( - \sum_{i,j=1}^{N} a_{i,j}(x) \frac{\partial u}{\partial x_{i}}  \frac{\partial u}{\partial x_{j}} \Big) + F'(u) \Big( - \sum_{i,j=1}^{N} a_{i,j}(x) \frac{\partial^{2} u}{\partial x_{i} \partial x_{j}} + c(x).\nabla u \Big) \\
=& \frac{f'(u)}{f^{2}(u)} ~ \nabla u ~ A ~ (\nabla u)^{T} + \frac{L u}{f(u)} = \lambda = L \big( \lambda \psi_{L} \big).
\end{align*}
It follows that $ L \Big( F \big( u(x) \big) - \lambda \psi_{L}(x) \Big) \geqslant 0 $ for all $x \in \Omega$. On the other hand, $F \big( u(x) \big) - \lambda \psi_{L}(x) \geqslant 0 $ on $ \partial \Omega $, hence, by the maximum principle we must have $ F \big( u(x) \big) \geqslant \lambda \psi_{L}(x) $ for all $ x \in \Omega $, so
\begin{eqnarray*}
F^{-1} \big( \lambda \psi_{L}(x) \big) \leqslant u(x) ~~~ \textrm{for} ~ \textrm{all} ~ x \in \Omega.
\end{eqnarray*}
Thus
\begin{eqnarray*}
\lambda \leqslant \dfrac{F \big( u(x_{0}) \big)}{\psi_{L,\Omega}}.
\end{eqnarray*}
In particular, the extremal solution of problem $(\ref{eq1})$ satisfies
\begin{eqnarray*}
F^{-1} \big( \lambda^{*} \psi_{L}(x) \big) \leqslant u^{*}(x) ~~~ \textrm{for} ~ \textrm{all} ~ x \in \Omega.
\end{eqnarray*}
Hence
\begin{eqnarray*}
\lambda^{*} \leqslant \dfrac{F \big( u^{*}(x_{0}) \big)}{\psi_{L,\Omega}} \leqslant \dfrac{F \big( a_{f} \big)}{\psi_{L,\Omega}}.
\end{eqnarray*}
This completes the proof.
\end{proof}
Now, we give an estimate on $L^{\infty}$-bound for the extremal solution of problem $(\ref{eq1})$.
\begin{theorem}
Extremal solution of problem $(\ref{eq1})$ satisfies the following
\begin{eqnarray}
\|f'(u^{*})\|_{\infty} \geqslant \inf_{0 < t < a_{f}} \dfrac{f(t)}{t}.
\end{eqnarray}
\end{theorem}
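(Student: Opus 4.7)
The approach is to exploit that the principal eigenvalue of the linearized operator vanishes at $\lambda^*$ and pair the extremal equation against the adjoint principal eigenfunction, closely mirroring the proof of Lemma~\ref{l4}. If $\|u^*\|_\infty = a_f$, the convexity of $f$ together with its blow-up at $a_f$ force $f'(t)\to\infty$ as $t\to a_f^-$, so $\|f'(u^*)\|_\infty = +\infty$ and the inequality is trivial. I therefore restrict attention to the classical case $\|u^*\|_\infty < a_f$.

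The next step is to establish $\kappa_1(\lambda^*, u^*)=0$. The stability statements from the previous section give $\kappa_1(\lambda, u_\lambda)\geqslant 0$ for all $\lambda<\lambda^*$; passing to the limit $\lambda\uparrow\lambda^*$ yields $\kappa_1(\lambda^*, u^*)\geqslant 0$, while $\kappa_1(\lambda^*, u^*)>0$ would make $\tilde{L}_{\lambda^*}$ invertible and the Implicit Function Theorem argument used in Proposition~\ref{p1} would extend the branch of classical stable solutions past $\lambda^*$, contradicting its maximality. Consequently there exists $\phi^*>0$ in $\Omega$ with $\phi^*|_{\partial\Omega}=0$ satisfying $L^*\phi^* = \lambda^* f'(u^*)\phi^*$. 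Multiplying $L u^* = \lambda^* f(u^*)$ by $\phi^*$ and integrating by parts (both $u^*$ and $\phi^*$ vanish on $\partial\Omega$) produces the key identity
\begin{equation*}
\int_\Omega \phi^*\bigl(f(u^*)-u^*f'(u^*)\bigr)\,dx = 0.
\end{equation*}

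Set $g(t):=f(t)-tf'(t)$. Convexity of $f$ gives $g'(t)=-tf''(t)\leqslant 0$, so $g$ is non-increasing with $g(0)=f(0)>0$. Combined with $\phi^*>0$, the identity forces $g(u^*)$ either to vanish identically or to be non-positive at some $x_0\in\Omega$. The first alternative is excluded: the zero set $\{t:g(t)=0\}$ does not contain $0$, yet $u^*$ is continuous with $u^*=0$ on $\partial\Omega$, so taking values only in that zero set would contradict the intermediate value theorem. Hence there exists $x_0\in\Omega$ with $g(u^*(x_0))\leqslant 0$, i.e.,
\begin{equation*}
f'(u^*(x_0))\geqslant \dfrac{f(u^*(x_0))}{u^*(x_0)}\geqslant \inf_{0<t<a_f}\dfrac{f(t)}{t},
\end{equation*}
which yields $\|f'(u^*)\|_\infty\geqslant \inf_{0<t<a_f} f(t)/t$, as claimed. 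The principal obstacle is the first step, namely verifying $\kappa_1(\lambda^*, u^*)=0$ and the existence of the positive adjoint eigenfunction at the borderline extremal solution; once this is in place, everything reduces to the direct computation above.
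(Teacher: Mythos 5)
Your proof is correct, but it travels a genuinely different route from the paper's. The paper also dismisses the singular case as trivial and then takes for granted the positive eigenfunction $\phi>0$ of the linearization at $(\lambda^*,u^*)$ with zero eigenvalue, i.e. $L\phi=\lambda^* f'(u^*)\phi$; but instead of pairing the equation for $u^*$ against the adjoint eigenfunction as you do, it pairs $\phi$ against the first eigenfunction $\eta$ of $L^*$ from $(\ref{eq25})$ to produce a point where $\lambda^* f'(u^*(x))\geqslant \mu_{1}(L^{*},\Omega)$, and then closes the argument by invoking the upper bound $\lambda^*\leqslant \mu_1(L^*,\Omega)\sup_t t/f(t)$ from $(\ref{eq8})$ (Lemma~\ref{l4}). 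Your version replaces that external input by the convexity inequality $f(t)\leqslant t f'(t)+f(0)$-type reasoning applied pointwise: the identity $\int_\Omega\phi^*\bigl(f(u^*)-u^*f'(u^*)\bigr)\,dx=0$ directly yields a point $x_0$ with $f'(u^*(x_0))\geqslant f(u^*(x_0))/u^*(x_0)$, which is in fact a slightly sharper pointwise conclusion than the stated infimum bound. Two small remarks: your dichotomy ("vanish identically or be non-positive somewhere") is more elaborate than needed --- since $\phi^*>0$, the vanishing of the integral already forces $g(u^*)\leqslant 0$ at some point, and $g(0)=f(0)>0$ then guarantees $u^*(x_0)>0$ so the division is legitimate; and your justification that $\kappa_1(\lambda^*,u^*)=0$ (limit of nonnegative eigenvalues plus the Implicit Function Theorem continuation) is actually more careful than the paper, which simply asserts the existence of $\phi$. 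Both arguments rest on the same unavoidable ingredient, the positive principal (adjoint) eigenfunction of the degenerate linearization at $\lambda^*$; what yours buys is independence from Lemma~\ref{l4} and from $\mu_1(L^*,\Omega)$, at the cost of needing the adjoint eigenfunction rather than the direct one.
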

\begin{proof}
If $u^{*}$ is singular, then the result is trivial. So we assume $u^{*}$ is regular. Let $\eta(x)$ be the positive first eigenfunction with corresponding eigenvalue $\mu_{1}(L^{*},\Omega)$ (see problem $(\ref{eq25})$). Now, since $u^{*}$ is regular there is some $\phi > 0$ such that
\begin{equation*}
\left\{\begin{array}{ll} L \phi = \lambda^{*} f'(u^{*}) \phi  {\rm }\ &x \in \Omega, \\
\hskip2.5mm \phi = 0 {\rm }\ &x \in \partial \Omega,
\end{array}\right.
\end{equation*}
Multiply this by $\eta(x)$ and integrate by parts to see that
\begin{eqnarray*}
\int_{\Omega}  \big( \lambda^{*} f'(u^{*}) - \mu_{1}(L^{*},\Omega) \big) \phi \eta ~ \mathrm{d} x =0.
\end{eqnarray*}
Thus there is some $x \in \Omega$ such that
\begin{eqnarray*}
\lambda^{*} f' \big( u^{*}(x) \big) \geqslant \mu_{1}(L^{*},\Omega).
\end{eqnarray*}
Combining this with inequality $(\ref{eq8})$ gives the desired result.
\end{proof}
Combining Theorem $\ref{t4}$ and the obtained lower bound in $(\ref{eq8})$ we conclude that
\begin{eqnarray}\label{eq9}
\dfrac{1}{\psi_{L,\Omega}} \sup_{0 < t < a_{f}}\dfrac{t}{f(t)} \leqslant \lambda^{*}(L,\Omega,f) \leqslant \dfrac{F \big( a_{f} \big)}{\psi_{L,\Omega}}.
\end{eqnarray}
Theorem $\ref{t5}$ illustrates the remarkable usefulness of $(\ref{eq9})$.
\begin{proof}[Proof of Theorem $ \ref{t5} $]
The proof of this theorem is exactly similar to the proof of Theorem 3.1 in \cite{6}. For the convenience of the reader we mention a brief description of the proof.

Take $f_{p}(t):=f(t^{p})$ for $p \geqslant 1$. It is easy to see that there exists a unique $t_{p} > 0$ such that
\begin{eqnarray}\label{eq26}
\frac{t_{p}}{f_{p}(t_{p})} = \displaystyle{\sup_{t > 0}} \frac{t}{f_{p}(t)} \hskip5mm \textrm{for} \hskip1mm \textrm{all} \hskip1mm p \geqslant 1.
\end{eqnarray}
Then, we can show that $t_{p} \longrightarrow 1$ and $t_{p}^{p} \longrightarrow 0$ as $p \rightarrow +\infty$. Therefore
\begin{eqnarray}\label{eq30}
\displaystyle{\lim_{p \rightarrow \infty}\sup_{t > 0}} \frac{t}{f_{p}(t)} = \displaystyle{\lim_{p \rightarrow \infty}} \frac{t_{p}}{f(t_{p}^{p})} = \frac{1}{f(0)}.
\end{eqnarray}
On the other hand
\begin{equation*}
\displaystyle{\lim_{p \rightarrow \infty}} \frac{1}{f_{p}(t)} = \left \{
\begin{array}{ll}
\displaystyle{\frac{1}{f(0)}} & \text{if} \hskip2mm 0 \leqslant t < 1, \\
0 & \text{if} \hskip2mm t > 1.
\end{array} \right.
\end{equation*}
Taking $\zeta: \Bbb{R}_{+} \rightarrow \Bbb{R}_{+}$ with $\zeta(t)=1 / f(0)$ for $t \in [0,1]$ and $\zeta(t)=1 / f_{2}(t)=1 / f(t^{2})$ for $t \in (1,+\infty)$, then $\zeta \in L^{1}(\Bbb{R}_{+})$ and $1 / f_{p}(t) \leqslant \zeta(t)$ for $p \geqslant 2$. Now, by the Lebesgue dominated convergence theorem,
\begin{eqnarray}\label{eq29}
\displaystyle{\lim_{p \rightarrow \infty}} \int_{0}^{\infty} \frac{ds}{f_{p}(s)} = \frac{1}{f(0)}.
\end{eqnarray}
Now, estimate $(\ref{eq9})$ guarantees that
\begin{eqnarray}\label{eq36}
\frac{1}{\psi_{L,\Omega}}\hskip1mm \displaystyle{\sup_{ t > 0 }\frac{t}{f_{p}(t)}} \leqslant \lambda^{*}_{p} \leqslant \frac{1}{\psi_{L,\Omega}} \int_{0}^{u^{*}_{p}(x_{0})} \frac{dt}{f_{p}(t)} \leqslant \frac{1}{\psi_{L,\Omega}} \int_{0}^{+\infty} \frac{dt}{f_{p}(t)}.
\end{eqnarray}
Taking the limit as $p$ tends to infinity in $(\ref{eq36})$ and using $(\ref{eq30})$ and $(\ref{eq29})$, it follows that
\begin{eqnarray*}
\lim_{p \rightarrow \infty} \lambda^{*}_{p} = \frac{1}{f(0) \psi_{L,\Omega}} \hskip5mm \textrm{and} \hskip5mm \lim_{p \rightarrow \infty} u^{*}_{p}(x_{0}) = + \infty,
\end{eqnarray*}
as claimed. 
\end{proof}
In Theorem $\ref{t1}$, by the super-solution method (Lemma $\ref{l3}$) we give a lower bound for the extremal parameter of problem $(\ref{eq1})$.
\begin{proof}[Proof of Theorem $ \ref{t1} $]
Take an $\alpha\in (0,~\frac{||F||_{\infty}}{\psi_\Omega} )$ and define
$\overline{u}(x)=F^{-1}(\alpha \psi_{L}(x))$ for $x\in \Omega.$
It is evident that $\overline{u} \in C^{2}(\Omega) \cap C^{1}(\partial \Omega)$. We show that $\bar{u}$ is a super-solution of $ (\ref{eq1}) $ for $\lambda=\alpha-\alpha^{2}\beta(\alpha)$. To do this, we compute $\Delta\bar{u}(x).$ Note that if we take $y=F^{-1}(\alpha t)$, then it is easy to see that  $y'=\alpha f(y)$ and $y''=\alpha^{2}f(y)f'(y)$. So
\begin{align*}
\Delta\bar{u}(x)&= \Big[ \alpha^{2} f'( \bar{u} ) \big| \nabla \psi_{L}(x) \big|^{2}-\alpha \Big] f( \bar{u} ) \\
&\leqslant \Bigg( \alpha^{2}\sup_{x\in \Omega}f' \Big( F^{-1} \big( \alpha \psi_{L}(x) \big) \Big) \Big| \nabla \psi_{L}(x) \Big|^{2}-\alpha \Bigg) f ( \bar{u} ) \\
&=- \Big( \alpha-\alpha^{2}\beta(\alpha) \Big) f ( \bar{u} ).
\end{align*}
In other words,
$ \Delta \bar{u} (x) + \Big( \alpha-\alpha^{2}\beta(\alpha) \Big) f ( \bar{u} ) \leqslant 0 $, and since we have $\bar{u}(x)=0,~x\in \partial \Omega$, this shows that $\bar{u}$ is a super-solution of $(\ref{eq1})$ for $\lambda=\alpha-\alpha^{2}\beta(\alpha)$, thus, by Lemma $\ref{l3}$, problem $(\ref{eq1})$ with $\lambda=\alpha-\alpha^{2}\beta(\alpha)$ has a classical solution and hence
\begin{eqnarray*}
\lambda^{*}(L,\Omega,f) \geqslant \alpha-\alpha^{2}\beta(\alpha).
\end{eqnarray*}
Taking the supremum over $\alpha\in (0,~\frac{||F||_{\infty}}{\psi_\Omega} )$, we obtain $(\ref{eq10})$.
\end{proof}
Combining Theorem $\ref{t1}$, Theorem $\ref{t4}$ and the  estimates in $ (\ref{eq8}) $, we have
\begin{eqnarray*}
\max \bigg\{ \sup_{0 < \alpha < \frac{||F||_{\infty}}{\psi_{L,\Omega}}} \alpha-\alpha^{2}\beta(\alpha), \dfrac{1}{\psi_{L,\Omega}} \sup_{0 < t < a_{f}}\dfrac{t}{f(t)} \bigg\} \leqslant \lambda^{*}(L,\Omega,f),
\end{eqnarray*}
and
\begin{eqnarray*}
\lambda^{*}(L,\Omega,f) \leqslant \min \bigg\{ \dfrac{F \big( a_{f} \big)}{\psi_{L,\Omega}}, \mu_{1}(L^{*},\Omega) \sup_{0 < t < a_{f}} \dfrac{t}{f(t)} \bigg\}.
\end{eqnarray*}
where $\beta(\alpha):=\displaystyle{\sup_{x \in \Omega}}f' \Big( F^{-1} \big( \alpha \psi_{L}(x) \big) \Big) \big| \nabla \psi_{L}(x) \big|^{2} $.

In the following two examples, we apply the above results for standard nonlinearities $f(u)=e^{u}$ (as a regular nonlinearity) and $ f(u) = \dfrac{1}{(1-u)^{2}} $ (as a singular nonlinearity) on the unit ball $ B(0,1) \subseteq \Bbb{R}^{N} $.
\begin{example}\label{e1}
Consider the following problem
\begin{equation}\label{eq27}
\left\{\begin{array}{ll} - \triangle u + \dfrac{-2x \centerdot \nabla u}{1+|x|^{2}} = \lambda e^{u}  {\rm }\ &x \in B(0,1), \\
\hskip25mm u = 0 {\rm }\ &x \in \partial B(0,1).
\end{array}\right.
\end{equation}
Here, we have
\begin{eqnarray*}
c(x)=\dfrac{-2x}{1+|x|^{2}},  ~~~~~~~ L=-\triangle + c(x) \centerdot \nabla,  ~~~~~~~ f(u)=e^{u},  ~~~~~~~ \Omega = B(0,1).
\end{eqnarray*}
Now, we look for radial solution for torsion function $\psi_{L}$. If there exists smooth function $\varphi:[0,1] \rightarrow \Bbb{R}$ such that $\psi_{L} = \varphi (|x|)$, then it is easy to see that $\varphi$ satisfies the following
\begin{equation*}
\left\{\begin{array}{ll} \varphi ''(|x|) + \Big( \dfrac{N-1}{|x|} + \dfrac{2 |x|}{1+|x|^{2}} \Big) \varphi ' (|x|) = -1 \\
\hskip49.5mm \varphi (1) = 0.
\end{array}\right.
\end{equation*}
Solving the above problem, we get
\begin{eqnarray*}
\psi_{L}(x) = \varphi (|x|) = \dfrac{N (1- |x|^{2}) + 2 \ln \Big( \dfrac{2}{1+|x|^{2}} \Big)}{2 N (N+2)}.
\end{eqnarray*}
Thus
\begin{eqnarray*}
\psi_{L,\Omega} = \psi_{L}(0) = \dfrac{N+ \ln 4}{2 N (N+2)}.
\end{eqnarray*}
By $(\ref{eq9})$, we have
\begin{eqnarray}\label{eq28}
\dfrac{2 N (N+2)}{e (N + \ln 4)} \leqslant \lambda^{*} (L, B(0,1), e^{u}) \leqslant \dfrac{2 N (N+2)}{N + \ln 4}.
\end{eqnarray}
One can also apply Theorem $\ref{t1}$ to obtain another lower bound for the extremal parameter of problem $(\ref{eq27})$. Here, we have
\begin{eqnarray*}
f'(t)=e^{t}, ~~~~~~~ F(t)=1-e^{-t}, ~~~~~~~ F^{-1}(t)=-\ln (1-t).
\end{eqnarray*}
Thus
\begin{eqnarray*}
\beta (\alpha) = \sup_{x \in B(0,1)} \dfrac{\big| \nabla \psi_{L}(x) \big|^{2}}{1-\alpha \psi_{L}(x)} = \sup_{0 < t < 1}\dfrac{\varphi '^{2}(t)}{1-\alpha \varphi (t)} ~~~ \textrm{for} ~ \textrm{all} ~ 0 < \alpha < \dfrac{2 N (N+2)}{N + \ln 4}.
\end{eqnarray*}
It can be easily checked that
\begin{eqnarray*}
\beta (\alpha) = \varphi '^{2}(1) = \dfrac{(N+1)^{2}}{N^{2} (N+2)^{2}} ~~~ \textrm{for} ~ \textrm{all} ~ 0 < \alpha < \dfrac{2 N^{2} (N+2)}{(N+1)^{2}}.
\end{eqnarray*}
On the other hand
\begin{eqnarray*}
\sup_{0 < \alpha < \dfrac{2 N (N+2)}{N + \ln 4}} \alpha - \alpha ^{2} \beta (\alpha) \geqslant \sup_{0 < \alpha < \dfrac{2 N^{2} (N+2)}{(N+1)^{2}}} \alpha - \alpha^{2} \beta(\alpha) = \dfrac{2 N^{3}}{(N+1)^{2}}.
\end{eqnarray*}
Hence
\begin{eqnarray*}
\dfrac{2 N^{3}}{(N+1)^{2}} \leqslant \lambda^{*}(L,B(0,1),e^{u}).
\end{eqnarray*}
Note that this lower bound is better than the one in $(\ref{eq28})$ for all $N \geqslant 3$.
\end{example}
\begin{example}
Consider the following problem
\begin{equation}\label{eq35}
\left\{\begin{array}{ll} - \triangle u + \dfrac{-2x \centerdot \nabla u}{1+|x|^{2}} = \dfrac{\lambda}{(1-u)^{2}}  {\rm }\ &x \in B(0,1), \\
\hskip25mm u = 0 {\rm }\ &x \in \partial B(0,1).
\end{array}\right.
\end{equation}
By Example $ \ref{e1} $, we know that
\begin{eqnarray*}
\psi_{L} = \varphi (|x|) = \dfrac{N (1- |x|^{2}) + 2 \ln \Big( \dfrac{2}{1+|x|^{2}} \Big)}{2 N (N+2)} ~~~ \textrm{and} ~~~ \psi_{L,\Omega} = \psi_{L}(0) = \dfrac{N+ \ln 4}{2 N (N+2)}.
\end{eqnarray*}
By $(\ref{eq9})$, we have
\begin{eqnarray}
\dfrac{8 N (N+2)}{27 (N + \ln 4)} \leqslant \lambda^{*} \big( L, B(0,1), (1-u)^{-2} \big) \leqslant \dfrac{2 N (N+2)}{3(N + \ln 4)}.
\end{eqnarray}
Again, one can also apply Theorem $\ref{t1}$ to obtain another lower bound for the extremal parameter of problem $(\ref{eq35})$.
It can be easily checked that
\begin{eqnarray*}
\beta (\alpha) = 2 \varphi '^{2}(1) = \dfrac{2 (N+1)^{2}}{N^{2} (N+2)^{2}} ~~~ \textrm{for} ~ \textrm{all} ~ 0 < \alpha < \dfrac{2 N^{2} (N+2)}{3 (N+1)^{2}}.
\end{eqnarray*}
On the other hand
\begin{eqnarray*}
\sup_{0 < \alpha < \dfrac{2 N (N+2)}{3 (N + \ln 4)}} \alpha - \alpha ^{2} \beta (\alpha) \geqslant \sup_{0 < \alpha < \dfrac{2 N^{2} (N+2)}{3 (N+1)^{2}}} \alpha - \alpha^{2} \beta(\alpha) = \dfrac{2 N^{2}(3 N + 2)}{9 (N+1)^{2}}.
\end{eqnarray*}
Hence
\begin{eqnarray*}
\dfrac{2 N^{2}(3N+2)}{9(N+1)^{2}} \leqslant \lambda^{*}(L,B(0,1),(1-u)^{-2}).
\end{eqnarray*}
Note that this lower bound is better than the one in $(\ref{eq28})$ for all $N \geqslant 2$.
\end{example}
We conclude this section, by proving the last assertion of Proposition $\ref{p2}$.
\begin{proof}[Proof of Proposition $ \ref{p2} $ $ (ii) $]
Set $\lambda(\alpha) = \alpha - \alpha^{2} \beta (\alpha)$ for all $0 < \alpha < \dfrac{\|F\|_{\infty}}{\psi_{L,\Omega}}$, where
\begin{eqnarray*}
\beta(\alpha):=\displaystyle{\sup_{x \in \Omega}}f' \Big( F^{-1} \big( \alpha \psi_{L}(x) \big) \Big) \big| \nabla \psi_{L}(x) \big|^{2}. 
\end{eqnarray*}
Clearly $ \lambda (\alpha) \longrightarrow 0 $ as $ \alpha \longrightarrow 0^{+} $. By Theorem $ \ref{t4} $ and Theorem $ \ref{t1} $ we have
\begin{eqnarray}\label{eq11}
\psi_{L} (x) \leqslant \dfrac{u_{ \lambda ( \alpha ) }(x)}{ \lambda ( \alpha ) } \leqslant \dfrac{1}{1- \alpha  \beta ( \alpha ) } \psi_{L} (x) ~~~ \textrm{for} ~ \textrm{all} ~ x \in \overline{\Omega}.
\end{eqnarray}
Taking the limit on both sides of $(\ref{eq11})$ as $\alpha \longrightarrow 0^{+}$, we then have the conclusion of Proposition  $\ref{p2}$ $ (ii) $.
\end{proof}
%
\section{Application to the explosion problem in a flow}
%
In this section, we apply previous results to the explosion problem in a flow. First, we determine the behavior of the extremal parameter of problem $ (\ref{eq12}) $ when the flow $ c(x) $ is divergence-free (see Theorem $ \ref{t3} $) and then we prove Theorem $ \ref{t6} $.
\begin{proof}[Proof of Theorem $ \ref{t3} $]
By Lemma $\ref{l2}$ and Theorem $\ref{t4}$ we have
\begin{eqnarray}\label{eq13}
\dfrac{1}{\psi_{A,\Omega}} \sup_{t > 0} \dfrac{t}{f(t)} \leqslant \lambda^{*}(A) \leqslant \dfrac{F \big( a_{f} \big)}{\psi_{A,\Omega}}.
\end{eqnarray}
Now, by Theorem A and estimate $(\ref{eq13})$ the proof of Theorem $\ref{t3}$ is complete.
\end{proof}
Theorem $\ref{t3}$ completely determine the behaviour of extremal parameter of problem $(\ref{eq12})$ when $c(x)$ is divergence-free. But there is still another interesting case when $c(x)$ is not divergence-free. As it is mentioned, in Theorem $\ref{t6}$, we completely determine the behaviour of extremal parameter of problem $(\ref{eq12})$ for a wide class of flows $c(x)$ which are not divergence-free.
\begin{proof}[Proof of Theorem $ \ref{t6} $]
Define
\begin{eqnarray*}
\varphi (r) = \int_{0}^{1} \dfrac{\int_{0}^{t}s^{N-1}g^{A}(s) \mathrm{d} s}{t^{N-1}g^{A}(t)} \mathrm{d} t - \int_{0}^{r} \dfrac{\int_{0}^{t}s^{N-1}g^{A}(s) \mathrm{d} s}{t^{N-1}g^{A}(t)} \mathrm{d} t ~~~ \textrm{for} ~ \textrm{all} ~ 0 \leqslant r \leqslant 1,
\end{eqnarray*}
where
\begin{eqnarray*}
g(r):= e^{\displaystyle{\int_{0}^{r}s  \rho(s) ds}}~~~\textrm{for} ~ 0 \leqslant r \leqslant 1.
\end{eqnarray*}
Then (as it is described in Example $\ref{e1}$ ) it is not hard to check that $\psi_{L_{A}}=\varphi (|x|)$ and since the function
\begin{eqnarray*}
r \longmapsto \int_{0}^{r} \dfrac{\int_{0}^{t}s^{N-1}g^{A}(s) \mathrm{d} s}{t^{N-1}g^{A}(t)} \mathrm{d} t ~~~ \textrm{for} ~ \textrm{all} ~ 0 \leqslant r \leqslant 1,
\end{eqnarray*}
is increasing, so
\begin{eqnarray}\label{eq15}
\psi_{L_{A},B} = \int_{0}^{1} \dfrac{\int_{0}^{t}s^{N-1}g^{A}(s) \mathrm{d} s}{t^{N-1}g^{A}(t)} \mathrm{d} t.
\end{eqnarray}
Making the change of variable $s=t h$ in the interior integral in $(\ref{eq15})$, we get
\begin{eqnarray*}
\int_{0}^{t}s^{N-1}g^{A}(s) \mathrm{d} s = t^{N} \int_{0}^{1} h^{N-1}g^{A}(t h) \mathrm{d} h.
\end{eqnarray*}
Thus
\begin{eqnarray*}
\psi_{L_{A},B} =  \int_{0}^{1} t~ \dfrac{\int_{0}^{1}h^{N-1}g^{A}(t h) \mathrm{d} h}{g^{A}(t)} \mathrm{d} t.
\end{eqnarray*}
\begin{itemize}
\item[(i)] If there esits $x_{0} \in [0,1]$ such that $\rho(x_{0}) < 0$, then the continuity of $\rho$ implies that there exits an interval $I = [a,b] \subseteq [0,1]$ such that $\rho$ is negative on $I$. This means that the function $g$ defined above is strictly decreasing on $I$. Choose an $\epsilon>0$ such that
\begin{eqnarray}\label{eq16}
0 < \frac{3}{2} - \sqrt{\dfrac{2a}{b} + \dfrac{1}{4}} < \epsilon < 1 - \dfrac{a}{b} < 1.
\end{eqnarray}
It is easy to see that inequality $(\ref{eq16})$ implies that
\begin{eqnarray}\label{eq19}
(1-\dfrac{\epsilon}{2})(1-\epsilon) < \dfrac{a}{b} < 1-\epsilon.
\end{eqnarray}
Now, since the function $g$ is strictly decreasing on $I$, then for all $A \geqslant 1$ we have
\begin{align}\label{eq21}
\psi_{L_{A},B} = \int_{0}^{1} t \dfrac{\int_{0}^{1}h^{N-1}g^{A}(t h) \mathrm{d} h}{g^{A}(t)} \mathrm{d} t &\geqslant  \int_{a / (1-\epsilon)}^{b}  \dfrac{\displaystyle{t \int_{1-\epsilon}^{1-(\epsilon / 2)}}h^{N-1}g^{A}(t h) \mathrm{d} h}{g^{A}(t)} \mathrm{d} t \nonumber \\ &\geqslant \int_{a / (1-\epsilon)}^{b} t \Bigg( \dfrac{g \big( t(1-(\epsilon / 2)) \big)}{g(t)} \Bigg)^{A} \mathrm{d} t ~ \int_{1-\epsilon}^{1-(\epsilon / 2)} h^{N-1} \mathrm{d} h \nonumber \\ &\geqslant \Bigg( \dfrac{g \big( b(1-(\epsilon / 2)) \big)}{g(a / (1-\epsilon))} \Bigg)^{A} \int_{a / (1-\epsilon)}^{b} t \mathrm{d} t \int_{1-\epsilon}^{1-(\epsilon / 2)} h^{N-1} dh.
\end{align}
By $(\ref{eq19})$, we know that $a < b(1- (\epsilon / 2)) < a / (1-\epsilon) < b$, therefore
\begin{eqnarray*}
\dfrac{g \big( b(1-(\epsilon / 2)) \big)}{g(a / (1-\epsilon))} > 1.
\end{eqnarray*}
Thus
\begin{eqnarray*}
 \Bigg( \dfrac{g \big( b(1-(\epsilon / 2)) \big)}{g(a / (1-\epsilon))} \Bigg)^{A} \longrightarrow \infty ~~ \textrm{as} ~ A \longrightarrow \infty.
\end{eqnarray*}
Now $(\ref{eq21})$ guarantees that $\psi_{L_{A},B} \longrightarrow \infty$ as $A \longrightarrow \infty$.
\item[(ii)] If $\rho \geqslant 0$ and $\rho \not\equiv 0$ on any interval $I \subseteq [0,1]$, then $g$ is strictly increasing.
Let $0 < \epsilon < 1$ be arbitrary, then
\begin{align*}
\int_{0}^{1} h^{N-1} g^{A}(t h) \mathrm{d} h &= \int_{0}^{\epsilon} h^{N-1} g^{A}(h t) \mathrm{d} h + \int_{\epsilon}^{1} h^{N-1} g^{A}(h t) \mathrm{d} h \\
&\leqslant \epsilon^{N} g^{A}(\epsilon t) + (1-\epsilon) g^{A}(t).
\end{align*}
It then  follows  that
\begin{eqnarray*}
\psi_{L_{A},B} \leqslant \dfrac{1-\epsilon}{2} + \epsilon^{N} \int_{0}^{1} t \Big( \dfrac{g(\epsilon t)}{g(t)} \Big)^{A} \mathrm{d} t.
\end{eqnarray*}
Since $g$ is strictly increasing, it is evident that $t \Big( \dfrac{g(\epsilon t)}{g(t)} \Big)^{A} \longrightarrow 0$ as $A \longrightarrow \infty$ pointwise for all $0 \leqslant t \leqslant 1$, on the other hand $t \Big( \dfrac{g(\epsilon t)}{g(t)\epsilon} \Big)^{A} \leqslant t \in L^{1}([0,1])$ for all $A \geqslant 0$. Now, Lebesgue dominated convergence theorem implies that
\begin{eqnarray*}
\lim_{A \rightarrow \infty} \epsilon^{N} \int_{0}^{1} t \Big( \dfrac{g(\epsilon t)}{g(t)} \Big)^{A} \mathrm{d} t = 0.
\end{eqnarray*}
Thus
\begin{eqnarray*}
\limsup_{A \rightarrow \infty} \psi_{L_{A},B} \leqslant \dfrac{1-\epsilon}{2}.
\end{eqnarray*}
Letting $\epsilon \longrightarrow 1^{-}$ in the above inequality, we get $\psi_{L_{A},B} \longrightarrow 0$ as $A \longrightarrow \infty$.
\item[(iii)] If $\rho \geqslant 0$ and $\rho \equiv 0$ on some interval $[a,b] \subseteq [0,1]$, then $g$ is constant on $[a,b]$. Since the function $g$ is increasing on $[0,1]$, then
\begin{eqnarray}\label{eq22}
\psi_{L_{A},B} = \int_{0}^{1} t \dfrac{\int_{0}^{1}h^{N-1}g^{A}(t h) \mathrm{d} h}{g^{A}(t)} \mathrm{d} t \leqslant \int_{0}^{1}t\dfrac{g^{A}(t) \int_{0}^{1}h^{N-1} \mathrm{d} h}{g^{A}(t)} \mathrm{d} t = \dfrac{1}{2 N} ~~~ \textrm{for} ~ \textrm{all} ~ A \geqslant 0.
\end{eqnarray}
On the other hand, since $g$ is constant on $[a,b]$ we have
\begin{align}\label{eq23}
\psi_{L_{A},B} = \int_{0}^{1} \dfrac{\int_{0}^{t}s^{N-1}g^{A}(s) \mathrm{d} s}{t^{N-1}g^{A}(t)} \mathrm{d} t &\geqslant \int_{a}^{b} \dfrac{\int_{a}^{t} s^{N-1} g^{A}(s) \mathrm{d} s}{t^{N-1} g^{A}(t)} \mathrm{d} t \nonumber \\ &=\int_{a}^{b} \dfrac{\int_{a}^{t} s^{N-1} \mathrm{d} s}{t^{N-1}} \mathrm{d} t \nonumber \\ &= \dfrac{1}{N} \int_{a}^{b}\dfrac{t^{N}-a^{N}}{t^{N-1}} \mathrm{d} t ~~~ \textrm{for} ~ \textrm{all} ~ A \geqslant 0.
\end{align}
By $(\ref{eq22})$ and $(\ref{eq23})$ we conclude that
\begin{eqnarray*}
C_{N,\rho}:=\dfrac{1}{N} \int_{a}^{b}\dfrac{t^{N}-a^{N}}{t^{N-1}} \mathrm{d} t \leqslant \psi_{L_{A},B} \leqslant \dfrac{1}{2N} ~~~ \textrm{for} ~ \textrm{all} ~ A \geqslant 0,
\end{eqnarray*}
that completes the proof.
\end{itemize}
\end{proof}
%

\end{document}